\documentclass[12pt]{article}%
\usepackage{amsmath}
\usepackage{amsfonts}
\usepackage{amssymb}
\usepackage{graphicx}
\usepackage{subfig}%
\usepackage{epstopdf}
\setcounter{MaxMatrixCols}{30}
%TCIDATA{OutputFilter=latex2.dll}
%TCIDATA{Version=5.50.0.2953}
%TCIDATA{CSTFile=40 LaTeX article.cst}
%TCIDATA{Created=Monday, September 14, 2020 11:05:22}
%TCIDATA{LastRevised=Thursday, April 27, 2023 19:03:54}
%TCIDATA{<META NAME="GraphicsSave" CONTENT="32">}
%TCIDATA{<META NAME="SaveForMode" CONTENT="1">}
%TCIDATA{BibliographyScheme=Manual}
%TCIDATA{<META NAME="DocumentShell" CONTENT="Standard LaTeX\Blank - Standard LaTeX Article">}
%TCIDATA{Language=American English}
%BeginMSIPreambleData
\providecommand{\U}[1]{\protect\rule{.1in}{.1in}}
%EndMSIPreambleData
\newtheorem{theorem}{Theorem}

\newtheorem{example}[theorem]{Example}

\newtheorem{lemma}[theorem]{Lemma}

\newtheorem{remark}[theorem]{Experiment}

\newenvironment{proof}[1][Proof]{\noindent\textbf{#1.} }{\ \rule{0.5em}{0.5em}}
\begin{document}

\title{Efficient vectors in priority setting methodology }
\author{Susana Furtado\thanks{Email: sbf@fep.up.pt The work of this author was
supported by FCT- Funda\c{c}\~{a}o para a Ci\^{e}ncia e Tecnologia, under
project UIDB/04721/2020.} \thanks{Corresponding author.}\\CEAFEL and Faculdade de Economia \\Universidade do Porto\\Rua Dr. Roberto Frias\\4200-464 Porto, Portugal
\and Charles R. Johnson \thanks{Email: crjohn@wm.edu. The work of this author was
supported in part by National Science Foundation grant DMS-0751964.}\\Department of Mathematics\\College of William and Mary\\Williamsburg, VA 23187-8795.}
\maketitle

\begin{abstract}
The Analytic Hierarchy Process (AHP) is a much discussed method in ranking
business alternatives based on empirical and judgemental information. We focus
here upon the key component of deducing efficient vectors for a reciprocal
matrix of pair-wise comparisons. It is not known how to produce all efficient
vectors. It has been shown that the entry-wise geometric mean of all columns
is efficient for any reciprocal matrix. Here, by combining some new basic
observations with some known theory, we 1) give a method for inductively
generating large collections of efficient vectors, and 2) show that the
entry-wise geometric mean of any collection of distinct columns of a
reciprocal matrix is efficient. Based on numerical simulations, there seems to
be no clear advantage over either the geometric mean of all columns or the
right Perron eigenvector of a reciprocal matrix.

\end{abstract}

\textbf{Keywords}: Decision analysis, consistent matrix, efficient vector,
geometric mean, reciprocal matrix.

\textbf{MSC2020}: 90B50, 91B06, 15A60, 05C20

\bigskip

\section{Introduction\label{s1}}

A method used in decision-making and frequently discussed in the literature is
the Analytic Hierarchy Process (AHP), suggested by Saaty \cite{saaty1977,
Saaty}. Several works since then have developed and discussed many aspects of
the method. See the surveys \cite{choo, is, zeleny}. A key element of the
method is the notion of \emph{pair-wise comparison (PC) matrix}. An $n$-by-$n$
positive matrix $A=[a_{ij}]$ is called a PC matrix if, for all $1\leq i,j\leq
n,$%

\[
a_{ji}=\frac{1}{a_{ij}}.
\]
Each diagonal entry of a PC matrix is $1.$ We refer to the set of all such
matrices as $\mathcal{PC}_{n}$. Often, we refer to these matrices as
\emph{reciprocal matrices}, as do other authors.

The $i,j$ entry of a reciprocal matrix is viewed as a pair-wise ratio
comparison between alternatives $i$ and $j,$ and the intent is to deduce an
ordering of the alternatives from it. If the reciprocal matrix is
\emph{consistent} (\emph{transitive}): $a_{ij}a_{jk}=a_{ik},$ for all triples
$i,j,k,$ there is a unique natural cardinal ordering, given by the relative
magnitudes of the entries in any column. However, in human judgements
consistency is unlikely. Inconsistency can also be an inherent feature of
objective datasets \cite{bozoki2016, chao, csato2013, petroczy2021,
petrocsato}. Then, there will be many vectors that might be deduced from a
reciprocal matrix $A$. Let $w$ be a positive $n$-vector and $w^{(-T)}$ the
transpose of its component-wise inverse. We may try to approximate $A$ by the
consistent matrix $W=ww^{(-T)},$ i.e., we wish to choose $w$ so that $W-A$ is
small in some sense. We say that $w$ is \emph{efficient} for $A$ if, for any
other positive vector $v$ and corresponding consistent matrix $V=vv^{(-T)},$
the entry-wise inequality $\left\vert V-A\right\vert \leq\left\vert
W-A\right\vert $ implies that $v$ and $w$ are proportional. (It follows from
Lemma \ref{lprop} that we give later that this definition is equivalent to
that of other authors for the notion of efficiency \cite{blanq2006,
european}). Clearly, a consistent approximation to a reciprocal matrix $A$
should be based upon a vector efficient for $A.$ If $A\ $is not itself
consistent, the set $\mathcal{E}(A)$ of efficient vectors for $A$ will include
many vectors not proportional to each other. This set is, however, at least
connected \cite{blanq2006}, but, in general, it is difficult to determine the
entire set $\mathcal{E}(A).$ For simplicity, we projectively view proportional
efficient vectors as the same, as they produce the same consistent matrix.
Several methods to study when a vector is efficient were developed and
algorithms to improve an inefficient vector have been provided (see \cite{anh,
blanq2006,bozoki2014, european, FerFur, Fu22} and the references therein).

Despite some criticism \cite{dyer0, dyer, johns, saaty2003}, one of the most
used methods to approximate a reciprocal matrix $A$ by a consistent matrix is
the one proposed by Saaty \cite{saaty1977, Saaty}, in which the consistent
matrix is based upon the right Perron eigenvector of $A$, a positive
eigenvector associated with the spectral radius of $A$ \textrm{\cite{HJ}.} The
efficiency of the Perron eigenvector for certain classes of reciprocal
matrices has been shown \textrm{\cite{p6, p2, FerFur}, }though examples of
reciprocal matrices for which this vector is inefficient are also known
\cite{baj, blanq2006, bozoki2014}. Another method to approximate $A$ by a
consistent matrix is based upon the geometric mean of all columns of $A,$
which is known to be an efficient vector for $A$ \textrm{\cite{blanq2006}}.
Many other proposals for approximating $A$ by a consistent matrix have been
made in the literature (for comparisons of different methods see, for example,
\cite{baj, choo, dij, fichtner86, golany, Kula}).

Before summarizing what we do here, we mention some more notation and
terminology. The Hadamard (or entry-wise) product of two vectors (of the same
size) or matrices (of the same dimension) is denoted by $\circ.$ For example,
if $A,B\in\mathcal{PC}_{n}$ then $A\circ B\in\mathcal{PC}_{n}$, and,
similarly, the $n$-by-$n$ consistent matrices are closed under the Hadamard
product. We use superscripts in parentheses to denote an exponent applied to
all entries of a vector or a matrix. For example
\[
\left(  u_{1}\circ u_{2}\circ\cdots\circ u_{k}\right)  ^{\left(  \frac{^{1}%
}{k}\right)  }%
\]
is the (Hadamard) geometric mean of positive vectors $u_{1},\ldots,u_{k}$ of
the same size. This column geometric mean is what is called the row geometric
mean for instance in \textrm{\cite{blanq2006}.}

For an $n$-by-$n$ matrix $A=[a_{ij}],$ we partition $A$ by columns as
$A=\left[  a_{1},\text{ }a_{2},\ldots,\text{\thinspace}a_{n}\right]  .$ The
principal submatrix determined by deleting (by retaining) the rows and columns
indexed by a subset $K\subseteq\{1,\ldots,n\}$ is denoted by $A(K)$ $(A[K]);$
we abbreviate $A(\{i\})$ as $A(i).$ Note that if $A$ is reciprocal
(consistent) then so is $A(i).$

In Section \ref{s2} we give some (mostly known) background that we will use
and make some related observations. In particular, we present the relationship
between efficiency and strong connectivity of a certain digraph and state the
efficiency of the Hadamard geometric mean of all the columns of a reciprocal
matrix. In Section \ref{s3} we give some (mostly new) additional background
that will also be helpful. In Section \ref{s4} we show explicitly how to
extend efficient vectors for $A(i)$ to efficient vectors for the reciprocal
matrix $A.$ This leads to an algorithm initiated by any $A[\{i,j\}]$, $i\neq
j,$ to produce a subset of $\mathcal{E}(A).$ This subset may not be all of
$\mathcal{E}(A)$ as truncation of an efficient vector for $A$ may not give one
for the corresponding principal submatrix. And we may get different subsets by
starting with different $i,j.$ In Section \ref{s5} we study the relationship
between efficient vectors for a reciprocal matrix $A$ and its columns. As
mentioned, any column of a consistent matrix generates that consistent matrix
and, so, is efficient for it. Similarly, any column of a reciprocal matrix is
efficient for it (Lemma \ref{lcol}), as is the geometric mean of any subset of
the columns (Theorem \ref{thcol}). In Section \ref{numerical}, we study
numerically, using different measures, the performance of these efficient
vectors in approximating $A$ by a consistent matrix and compare them, from
this point of view, with the Perron eigenvector (in cases in which it is
efficient). It will be clear that the geometric mean of all columns can be
significantly outperformed by the geometric mean of other collections of
columns. We also show by example that $\mathcal{E}(A)$ is not closed under
geometric mean (Section \ref{s5}). Finally, in Section \ref{s6} we give some conclusions.

\section{Technical Background \label{s2}}

We start with some known results that are relevant for this work. First, it is
important to know how $\mathcal{E}(A)$ changes when $A$ is subjected to either
a positive diagonal similarity or a permutation similarity, or both (a
monomial similarity).

\begin{lemma}
\textrm{\cite{CFF, Fu22}}\label{lsim} Suppose that $A\in\mathcal{PC}_{n}$ and
$w\in\mathcal{E}(A).$ Then, if $D$ is a positive diagonal matrix ($P$ is a
permutation matrix), then $DAD^{-1}\in\mathcal{PC}_{n}$ and $Dw\in
\mathcal{E}(DAD^{-1})$ ($PAP^{T}\in\mathcal{PC}_{n}$ and $Pw\in\mathcal{E}%
(PAP^{T})$).
\end{lemma}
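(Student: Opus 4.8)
The plan is to reduce efficiency for the transformed matrix to efficiency for $A$ itself, by tracking how a monomial similarity acts on the two ingredients of the definition: the consistent matrix generated by a vector, and the entry-wise difference from $A$. Since a monomial similarity is the composition of a positive diagonal similarity and a permutation similarity, I would treat the two cases separately and then compose them.

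First I would record two ``equivariance'' identities. For a positive diagonal $D$, writing $B=DAD^{-1}$, a direct computation from $b_{ij}=(d_{i}/d_{j})a_{ij}$ shows that $B$ is reciprocal and that the consistent matrix generated by $Dw$ is $(Dw)(Dw)^{(-T)}=D\,(ww^{(-T)})\,D^{-1}=DWD^{-1}$; the same computation gives $DVD^{-1}$ for the consistent matrix generated by $Dv$. For a permutation $P$, using that component-wise inversion commutes with $P$ one gets $(Pw)(Pw)^{(-T)}=PWP^{T}$, and $PAP^{T}$ is reciprocal because permutation similarity only relabels the index pairs. Thus in both cases the consistent matrix attached to the transformed vector is precisely the similarity-transform of $W$.

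The key step is to see that the similarity preserves the entry-wise order appearing in the definition of efficiency. In the diagonal case, for any positive $v$ with $V=vv^{(-T)}$ and $v'=Dv$, one has $|DVD^{-1}-B|_{ij}=(d_{i}/d_{j})\,|V-A|_{ij}$ and likewise $|DWD^{-1}-B|_{ij}=(d_{i}/d_{j})\,|W-A|_{ij}$; since each factor $d_{i}/d_{j}$ is strictly positive, $|V'-B|\le|W'-B|$ holds if and only if $|V-A|\le|W-A|$. In the permutation case the difference matrices are merely relabeled, $|PVP^{T}-PAP^{T}|=P\,|V-A|\,P^{T}$, and permuting entries preserves entry-wise inequalities, so again the two inequalities are equivalent. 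I regard this scaling/relabeling bookkeeping as essentially the only content of the argument; once it is in place the conclusion is immediate.

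Finally I would close the argument. Given $w\in\mathcal{E}(A)$ and a competitor $v'$ with $|V'-B|\le|W'-B|$, set $v=D^{-1}v'$ (respectively $v=P^{T}v'$); by the equivalence above $|V-A|\le|W-A|$, so efficiency of $w$ forces $v$ and $w$ to be proportional, and applying the invertible map $D$ (respectively $P$) shows $v'$ and $Dw$ (respectively $Pw$) are proportional. Hence $Dw\in\mathcal{E}(DAD^{-1})$ and $Pw\in\mathcal{E}(PAP^{T})$, and composing the two handles a general monomial similarity. I do not expect any genuine obstacle here; the only care needed is to confirm the positivity of the diagonal scaling factors $d_{i}/d_{j}$, which is exactly what guarantees that the direction of the inequality is unchanged.
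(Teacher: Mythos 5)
Your proof is correct and complete. Note that the paper itself does not prove this lemma; it is quoted from \cite{CFF, Fu22}, so there is no internal proof to compare against. Your argument is the natural one working directly from the paper's definition of efficiency: you verify the equivariance identities $(Dw)(Dw)^{(-T)}=DWD^{-1}$ and $(Pw)(Pw)^{(-T)}=PWP^{T}$, observe that the difference matrices transform by $|DVD^{-1}-DAD^{-1}|_{ij}=(d_{i}/d_{j})|V-A|_{ij}$ (respectively by entry relabeling), and then transport an arbitrary competitor $v'$ back to $v=D^{-1}v'$ (respectively $v=P^{T}v'$), which is legitimate because every positive competitor arises this way and positivity of the factors $d_{i}/d_{j}$ preserves the direction of the entry-wise inequalities. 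It is worth remarking that, given Theorem \ref{blanq}, there is an even shorter route the references essentially exploit: the digraphs coincide, $G(DAD^{-1},Dw)=G(A,w)$, since the edge condition $\frac{d_{i}w_{i}}{d_{j}w_{j}}\geq\frac{d_{i}}{d_{j}}a_{ij}$ reduces to $\frac{w_{i}}{w_{j}}\geq a_{ij}$, and $G(PAP^{T},Pw)$ is $G(A,w)$ with relabeled vertices, so strong connectivity (hence efficiency) is preserved immediately. Your definition-based argument has the advantage of not relying on that characterization, at the cost of the bookkeeping you carried out; both are valid.
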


Next we define a directed graph (digraph) associated with a matrix
$A\in\mathcal{PC}_{n}$ and a positive $n$-vector $w,$ which is helpful in
studying the efficiency of $w$ for $A.$ For $w=\left[
\begin{array}
[c]{ccc}%
w_{1} & \cdots & w_{n}%
\end{array}
\right]  ^{T}$, we denote by $G(A,w)$ the directed graph (digraph) whose
vertex set is $\{1,\ldots,n\}$ and whose directed edge set is%
\[
\{i\rightarrow j:\frac{w_{i}}{w_{j}}\geq a_{ij}\text{, }i\neq j\}.
\]

In \cite{blanq2006} the authors proved that the efficiency of $w$ can be
determined from $G(A,w)$.

\begin{theorem}
\textrm{\cite{blanq2006}}\label{blanq} Let $A\in\mathcal{PC}_{n}$. A positive
$n$-vector $w$ is efficient for $A$ if and only if $G(A,w)$ is a strongly
connected digraph, that is, for all pairs of vertices $i,j,$ with $i\neq j,$
there is a directed path from $i$ to $j$ in $G(A,w)$.
\end{theorem}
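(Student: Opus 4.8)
The plan is to prove both implications by reformulating the entry-wise comparison $|V-A|\le|W-A|$ in terms of the componentwise ratio $t_i=v_i/w_i$. Writing $W_{ij}=w_i/w_j$ and $V_{ij}=v_i/v_j=(t_i/t_j)W_{ij}$, the first thing I would establish is a monotonicity property along the edges of $G(A,w)$: if $i\to j$ is an edge, meaning $W_{ij}\ge a_{ij}$, then $|W_{ij}-a_{ij}|=W_{ij}-a_{ij}$, so the hypothesis $|V_{ij}-a_{ij}|\le W_{ij}-a_{ij}$ forces $V_{ij}\le W_{ij}$; since $W_{ij}>0$ this is exactly $t_i\le t_j$. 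Thus every directed edge $i\to j$ records the inequality $t_i\le t_j$. I would also note the structural fact that the undirected skeleton of $G(A,w)$ is complete: for $i\ne j$, either $w_i/w_j\ge a_{ij}$ or, taking reciprocals, $w_j/w_i\ge a_{ji}$, so at least one of the arcs $i\to j$, $j\to i$ is always present.

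For the ``if'' direction, assume $G(A,w)$ is strongly connected and suppose $|V-A|\le|W-A|$. Given any pair $i,j$, a directed path from $i$ to $j$ together with the monotonicity observation gives $t_i\le t_j$, while a directed path from $j$ to $i$ gives $t_j\le t_i$; hence $t_i=t_j$ for all $i,j$, so all the $t_i$ are equal and $v$ is proportional to $w$. This shows $w$ is efficient.

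For the ``only if'' direction I would argue contrapositively. If $G(A,w)$ is not strongly connected, its condensation has a sink strongly connected component, which furnishes a proper nonempty $S\subseteq\{1,\ldots,n\}$ with no arc from $S$ to its complement. By the skeleton-completeness remark, for every $i\in S$ and $j\in S^{c}$ the arc $i\to j$ is absent, so $W_{ij}<a_{ij}$ strictly, and by reciprocity $W_{ji}>a_{ji}$ strictly. I would then build a non-proportional improvement by scaling: set $t_i=1+\varepsilon$ for $i\in S$ and $t_i=1$ for $i\in S^{c}$, i.e.\ $v=w$ except that the $S$-components are multiplied by $1+\varepsilon$. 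Entries with both indices in $S$, or both in $S^{c}$, are unchanged, so their errors match those of $W$. For $i\in S$, $j\in S^{c}$ the value $V_{ij}=(1+\varepsilon)W_{ij}$ moves up from $W_{ij}<a_{ij}$ toward $a_{ij}$, and for $i\in S^{c}$, $j\in S$ the value $V_{ij}=W_{ij}/(1+\varepsilon)$ moves down from $W_{ij}>a_{ij}$ toward $a_{ij}$; since these strict inequalities leave room, a sufficiently small $\varepsilon>0$ keeps every cross error no larger than the corresponding error of $W$. Thus $|V-A|\le|W-A|$ with $v$ not proportional to $w$, so $w$ is inefficient.

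The conceptual core, and the step I would be most careful about, is the monotonicity observation $t_i\le t_j$ for each edge, since it converts the whole problem into reachability in $G(A,w)$ and makes the ``if'' direction immediate. The only genuine technical point in the ``only if'' direction is checking that the finitely many strict inequalities $W_{ij}<a_{ij}$ (and $W_{ij}>a_{ij}$) permit a single uniform choice of $\varepsilon$; this is routine because there are finitely many cross pairs and each leaves a strictly positive slack. Identifying the separating set $S$ via the sink component of the condensation is the cleanest way to guarantee the one-directional cross-arc structure that the perturbation exploits.
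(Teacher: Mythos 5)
Your proof is correct, but there is nothing in the paper to compare it against: Theorem~\ref{blanq} is quoted from \cite{blanq2006} and the paper supplies no proof of it, so your argument stands as a self-contained derivation of a result the authors only cite. Both directions check out. The monotonicity observation --- that an arc $i\to j$ (i.e.\ $w_i/w_j\ge a_{ij}$) together with $|V-A|\le|W-A|$ forces $t_i\le t_j$ for $t_k=v_k/w_k$ --- combined with the completeness of the underlying undirected graph (for each pair at least one of the two arcs is present, by reciprocity) reduces the ``if'' direction to transitivity of $\le$ along directed paths, exactly as you say. In the ``only if'' direction, the sink strong component $S$ of the condensation gives the strict inequalities $w_i/w_j<a_{ij}$ for all $i\in S$, $j\notin S$, and your scaling $v_i=(1+\varepsilon)w_i$ on $S$ works with any $1+\varepsilon\le\min\{a_{ij}w_j/w_i:\ i\in S,\ j\notin S\}$; note that by reciprocity this single finite minimum (which exceeds $1$) simultaneously controls the $(i,j)$ and the $(j,i)$ cross entries, so the uniform choice of $\varepsilon$ you flagged as the technical point is even cleaner than you suggest. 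The resulting $v$ is not proportional to $w$ yet satisfies $|V-A|\le|W-A|$, which negates the paper's definition of efficiency directly; moreover, since your $v$ strictly improves every cross entry, the same construction also establishes inefficiency under the Pareto-dominance definition of \cite{blanq2006, european}, which by Lemma~\ref{lprop} is equivalent to the definition used here. Your perturbation idea is also the mechanism the paper itself exploits later (in the discussion preceding Theorem~\ref{thext} and in the proof of Theorem~\ref{thcol}), where edges into and out of a vertex set are what certify strong connectivity.
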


Recall \textrm{\cite{HJ}} that $G(A,w)$ is strongly connected if and only if
$(I_{n}+L)^{n-1}$ is positive. Here $I_{n}$ is the identity matrix of order
$n$ and $L=[l_{ij}]$ is the adjacency matrix of $G(A,w),$ that is, $l_{ij}=1$
if $i\rightarrow j$ is an edge in $G(A,w),$ and $l_{ij}=0$ otherwise.

\bigskip

In \cite{blanq2006}, it was shown that the geometric mean of all the columns
of a reciprocal matrix $A$ is an efficient vector for $A$\textrm{. }This
result comes from the fact that the geometric mean minimizes the logarithmic
least squares objective function (see also \cite{CW85}).

\begin{theorem}
\label{theogmallcolumns}\textrm{\cite{blanq2006}}If $A\in\mathcal{PC}_{n},$
then
\[
\left(  a_{1}\circ a_{2}\circ\cdots\circ a_{n}\right)  ^{\left(  \frac{^{1}%
}{n}\right)  }\in\mathcal{E}(A).
\]

\end{theorem}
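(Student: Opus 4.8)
The plan is to invoke the graph-theoretic criterion of Theorem~\ref{blanq} and to show directly that $G(A,w)$ is strongly connected, where $w=(a_1\circ a_2\circ\cdots\circ a_n)^{(1/n)}$, so that $w_i=\left(\prod_{j=1}^n a_{ij}\right)^{1/n}$. The natural move is to pass to logarithms: set $b_{ij}=\log a_{ij}$, so that $B=[b_{ij}]$ is skew-symmetric (because $A$ is reciprocal) with zero diagonal, and set $g_i=\log w_i=\frac1n\sum_{j=1}^n b_{ij}$, the average of row $i$ of $B$. The edge condition $\frac{w_i}{w_j}\ge a_{ij}$ defining $G(A,w)$ then reads $g_i-g_j\ge b_{ij}$; writing $\epsilon_{ij}=g_i-g_j-b_{ij}$, the arc $i\to j$ is present precisely when $\epsilon_{ij}\ge 0$. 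Note that $[\epsilon_{ij}]$ is again skew-symmetric, so between any two vertices at least one of the two arcs is present.

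The key computation is that every row sum of $[\epsilon_{ij}]$ is zero. First I would expand
\[
\sum_{j=1}^n \epsilon_{ij}=\sum_{j=1}^n\left(g_i-g_j-b_{ij}\right)=n g_i-\sum_{j=1}^n g_j-\sum_{j=1}^n b_{ij}.
\]
Since $n g_i=\sum_{j} b_{ij}$ by the definition of $g_i$, the first and third terms cancel, leaving $-\sum_j g_j$; and $\sum_j g_j=\frac1n\sum_{j,k}b_{jk}=0$ because $B$ is skew-symmetric. Hence $\sum_j \epsilon_{ij}=0$ for each $i$.

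With this identity in hand, strong connectivity follows from a standard cut argument. I would use the fact that a digraph is strongly connected if and only if every nonempty proper subset $S$ of its vertices has at least one arc leaving it. Suppose, for contradiction, that some nonempty proper $S$ has no arc to its complement $S^c$; then $\epsilon_{ij}<0$ for every $i\in S$, $j\in S^c$, so $\sum_{i\in S,\,j\in S^c}\epsilon_{ij}<0$, this sum being over a nonempty index set. On the other hand, summing the zero-row-sum identity over $i\in S$ gives $\sum_{i\in S}\sum_{j=1}^n \epsilon_{ij}=0$; splitting the inner sum over $S$ and $S^c$ and using that $\sum_{i,j\in S}\epsilon_{ij}=0$ by skew-symmetry of the principal submatrix, we obtain $\sum_{i\in S,\,j\in S^c}\epsilon_{ij}=0$, a contradiction. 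Thus $G(A,w)$ is strongly connected and, by Theorem~\ref{blanq}, $w\in\mathcal{E}(A)$.

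The manipulations here are all routine; the only points needing care are the two cancellations yielding the zero-row-sum identity and the choice of the correct cut characterization of strong connectivity (together with the observation that the offending cut sum is \emph{strictly} negative, which holds because $S$ and $S^c$ are both nonempty). I expect the genuinely load-bearing step to be the recognition that the log-geometric-mean is exactly the vector making $[\epsilon_{ij}]$ have vanishing row sums, since it is precisely this balancing that closes every cut.
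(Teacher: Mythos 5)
Your proof is correct, but it takes a genuinely different route from the paper's. The paper does not actually prove Theorem~\ref{theogmallcolumns}: it quotes it from the literature and attributes it to the fact that the geometric mean is the (essentially unique) minimizer of the logarithmic least squares objective, so efficiency follows from a general optimization principle (a Pareto-dominating vector would contradict minimality/uniqueness, via the nontrivial equivalence between entrywise dominance and dominance of the log-errors). You instead verify the strong-connectivity criterion of Theorem~\ref{blanq} directly: passing to logarithms, you observe that the error matrix $[\epsilon_{ij}]$ with $\epsilon_{ij}=g_i-g_j-b_{ij}$ is skew-symmetric and that the geometric mean is exactly the choice of $g$ making all its row sums vanish; combined with the cut characterization of strong connectivity (every nonempty proper vertex set must have an outgoing arc) and skew-symmetry of each principal block, this forces every cut to carry a nonnegative entry, hence an arc. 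All steps check out, including the strictness of the offending cut sum and the harmless inclusion of the zero diagonal terms. What your approach buys is a short, self-contained, purely combinatorial proof that needs no optimization theory and no appeal to the equivalence of error measures; what the paper's cited approach buys is the link to the axiomatic LLS literature and a proof scheme (minimizers of strictly monotone objectives are efficient) that applies beyond this particular vector. It is also worth noting that your zero-row-sum balancing is an additive analogue of the argument the paper itself uses to prove the more general Theorem~\ref{thcol}, where the product of the entries of a reciprocal principal submatrix being $1$ forces the mean vector to be neither entrywise above nor below $\mathbf{e}$; your cut argument, however, does not extend verbatim to proper subsets of columns, since for those the full row sums of $[\epsilon_{ij}]$ no longer vanish.
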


In \cite{CFF}, all the efficient vectors for a simple perturbed consistent
matrix, that is, a reciprocal matrix obtained from a consistent one by
perturbing one entry above the main diagonal and the corresponding reciprocal
entry, were described. Let $Z_{n}(x),$ with $x>0,$ be the matrix in
$\mathcal{PC}_{n}$ with all entries equal to $1$ except those in positions
$1,n$ and $n,1,$ which are $x$ and $\frac{1}{x},$ respectively.\vspace{0cm}
For any simple perturbed consistent matrix $A\in\mathcal{PC}_{n},$ there is a
positive diagonal matrix $D$ and a permutation matrix $P$ such that%
\[
DPAP^{-1}D^{-1}=Z_{n}(x),
\]
for some $x>0.$ Taking into account Lemma \ref{lsim}, an $n$-vector $w$ is
efficient for $A$ if and only if $DPw$ is efficient for $Z_{n}(x).$ For this
reason, we focused on the description of the efficient vectors for $Z_{n}(x),$
as the efficient vectors for a general simple perturbed consistent matrix can
be obtained from them using Lemma \ref{lsim}.

\begin{theorem}
\cite{CFF}\label{tmain} Let $n\geq3$, $x>0$ and $w=\left[
\begin{array}
[c]{cccc}%
w_{1} & \cdots & w_{n-1} & w_{n}%
\end{array}
\right]  ^{T}$ be a positive vector. Then $w$ is efficient for $Z_{n}(x)$ if
and only if
\[
w_{n}\leq w_{i}\leq w_{1}\leq w_{n}x,\text{ for }i=2,\ldots,n-1,
\]
or%
\[
w_{n}\geq w_{i}\geq w_{1}\geq w_{n}x,\text{ for }i=2,\ldots,n-1.
\]

\end{theorem}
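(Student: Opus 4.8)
The plan is to use Theorem~\ref{blanq}, which reduces efficiency of $w$ for $Z_n(x)$ to the strong connectivity of the digraph $G(Z_n(x),w)$. First I would write out explicitly which edges belong to $G(Z_n(x),w)$. Since every off-diagonal entry of $Z_n(x)$ equals $1$ except the $(1,n)$ and $(n,1)$ entries, the edge condition $\frac{w_i}{w_j}\geq a_{ij}$ becomes, for the ordinary pairs $\{i,j\}$ with $\{i,j\}\neq\{1,n\}$, simply $w_i\geq w_j$; while for the special pair it reads $\frac{w_1}{w_n}\geq x$ (edge $1\to n$) and $\frac{w_n}{w_1}\geq\frac1x$, i.e. $w_n x\geq w_1$ (edge $n\to 1$). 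So the digraph is entirely governed by the order relations among the coordinates of $w$, with the single pair $1,n$ governed by the perturbed thresholds $x$ and $\frac1x$.

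Next I would prove the ``if'' direction. Assume the first chain of inequalities $w_n\leq w_i\leq w_1\leq w_n x$ holds (the second case follows by the symmetry $w\mapsto w^{(-1)}$, or equivalently by transposing, which reverses all edges and preserves strong connectivity). Under these hypotheses, for every ordinary pair the larger-or-equal coordinate points to the smaller one; since $w_1$ is a maximum among the coordinates and $w_n$ a minimum, there are edges $1\to i$ and $i\to n$ for each intermediate $i$, giving directed paths into and out of every vertex through the route $1\to i\to n$. The remaining issue is to close the cycle between $1$ and $n$: the condition $w_1\leq w_n x$ is exactly what forces the edge $n\to 1$ to be present, so together with $1\to n$ (which holds because $w_1\geq w_n$, and one checks $\frac{w_1}{w_n}\geq x$ is not required for that edge --- rather the edge $1\to n$ comes from the comparison, so care is needed here about which of the two thresholds $x$ or $\tfrac1x$ applies). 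I would verify carefully that the large coordinate $w_1$ and the small coordinate $w_n$, combined with the threshold $w_1\leq w_n x$, yield edges in both directions between some vertices reaching $1$ and $n$, so that $G(Z_n(x),w)$ is strongly connected.

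For the ``only if'' direction I would argue contrapositively: if $w$ satisfies neither chain, I must exhibit a vertex set with no returning path, i.e. a proper nonempty $S$ with no edge leaving $S$, violating strong connectivity. The natural candidates are the set of indices achieving the maximum coordinate value or the minimum; if $w_1$ fails to be a maximum or $w_n$ fails to be a minimum among the relevant coordinates, or if the threshold inequality $w_1\leq w_n x$ fails (and symmetrically for the second case), one of these extremal sets becomes a ``source'' or ``sink'' with no outgoing (respectively incoming) edge. I would enumerate the ways the two displayed chains can fail and match each to such an obstruction in the digraph.

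The main obstacle I expect is the bookkeeping around the special pair $\{1,n\}$: unlike the ordinary pairs, whose edges depend only on the weak order of coordinates, the edges between $1$ and $n$ depend on the thresholds $x$ and $\frac1x$, so the presence or absence of the edges $1\to n$ and $n\to 1$ must be tracked separately from the pure ordering. Getting the inequality $w_1\leq w_n x$ (and its mirror $w_1\geq w_n x$ in the second case) to line up precisely with strong connectivity --- rather than with a slightly weaker or stronger connectivity condition --- is the delicate step, and I would treat the boundary cases $w_1=w_n$ and $w_1=w_n x$ with particular attention. I would also separately handle small $n$ (the case $n=3$, where there are no intermediate indices $i$) to make sure the chain conditions degenerate correctly.
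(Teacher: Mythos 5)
First, note that the paper itself does not prove Theorem \ref{tmain}: it is imported from \cite{CFF} with a citation, so there is no in-paper proof to compare yours against. On its own merits, your proposal takes the natural route, and the same one the paper uses for its new results (compare the discussion preceding Theorem \ref{thext}): reduce efficiency to strong connectivity of $G(Z_n(x),w)$ via Theorem \ref{blanq}, observe that every pair except $\{1,n\}$ contributes edges according to the order of the entries of $w$, while the pair $\{1,n\}$ is governed by the thresholds $x$ and $1/x$, and then analyze the digraph. Your ``if'' direction is sound: under the first chain you have edges $1\to i$ and $i\to n$ for every intermediate $i$, and $w_1\le w_n x$ gives the edge $n\to 1$, so the cycles $1\to i\to n\to 1$ already make the digraph strongly connected. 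The one genuine error in your text is the parenthetical claim that the edge $1\to n$ ``holds because $w_1\ge w_n$'': it does not --- for the special pair that edge requires $w_1\ge w_n x$, which may well fail under the first chain. Fortunately this edge is never needed, since the cycle closes through $n\to 1$; you flag the confusion yourself, and deleting that clause repairs the argument. Also, when you invoke the symmetry $w\mapsto w^{(-1)}$/transposition for the second chain, note that $Z_n(x)^T=Z_n(1/x)$, so the symmetry exchanges $x$ and $1/x$; this is harmless because the statement is proved for all $x>0$ at once, or you can simply exhibit the symmetric cycle $1\to n\to i\to 1$ directly.

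Your ``only if'' sketch also completes correctly, and here is the one observation that makes your ``enumeration of failures'' finite and clean. Let $S_{\max}$ and $S_{\min}$ be the sets of indices at which $w$ attains its maximum and minimum. If $w$ is not constant, these sets are nonempty, proper and disjoint, and an edge from outside into $S_{\max}$ (or from $S_{\min}$ to the outside) can only arise from the special pair $\{1,n\}$, because for ordinary pairs a strictly larger entry never receives an edge from a strictly smaller one. Strong connectivity therefore forces: either $1\in S_{\max}$, $n\notin S_{\max}$ and $w_1\le w_n x$, or $n\in S_{\max}$, $1\notin S_{\max}$ and $w_1\ge w_n x$; and the analogous dichotomy for $S_{\min}$. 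Since $S_{\max}$ and $S_{\min}$ are disjoint, only two of the four combinations can occur, and they are precisely the two chains in the statement ($w_1$ maximal, $w_n$ minimal, $w_1\le w_n x$; or $w_n$ maximal, $w_1$ minimal, $w_1\ge w_n x$). The constant case is trivial, as one chain then holds automatically according to whether $x\ge 1$ or $x\le 1$. With that short case check written out, together with the boundary and $n=3$ checks you already mention, your outline becomes a complete and correct proof.
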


\section{Additional facts on efficiency\label{s3}}

From the following result we may conclude that the definition of efficient
vector given in Section \ref{s1} is equivalent to the one in \cite{blanq2006,
european}.

Here and throughout, if $A\in\mathcal{PC}_{n}$ and $w$ is a positive
$n$-vector, we denote
\[
D(A,w):=ww^{(-T)}-A.
\]
By $|D(A,w)|$ we mean the entry-wise absolute value of $D(A,w).$

\begin{lemma}
\label{lprop}Let $A\in\mathcal{PC}_{n}$ and $v,w$ be positive $n$-vectors.
Then, $\left\vert D(A,w)\right\vert =\left\vert D(A,v)\right\vert $ if and
only if $v$ and $w$ are proportional.
\end{lemma}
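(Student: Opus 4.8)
The plan is to prove the two implications separately, with the forward (``only if'') direction being immediate and essentially all the content residing in the converse. First, suppose $v$ and $w$ are proportional, say $v=cw$ with $c>0$. Then $\frac{v_i}{v_j}=\frac{w_i}{w_j}$ for all $i,j$, so $vv^{(-T)}=ww^{(-T)}$, whence $D(A,v)=D(A,w)$ and in particular $|D(A,w)|=|D(A,v)|$. This handles one direction with no real work.

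For the converse, I would write the hypothesis entry-wise. Abbreviating the consistent matrices $W:=ww^{(-T)}$ and $V:=vv^{(-T)}$, so that $W_{ij}=\frac{w_i}{w_j}$ and $V_{ij}=\frac{v_i}{v_j}$, the assumption $|D(A,w)|=|D(A,v)|$ reads $|W_{ij}-a_{ij}|=|V_{ij}-a_{ij}|$ for every pair $i,j$. For a fixed off-diagonal pair $(i,j)$ this absolute-value equation has exactly two possibilities: either $W_{ij}=V_{ij}$, or else $W_{ij}$ and $V_{ij}$ lie symmetrically about $a_{ij}$, i.e. $W_{ij}+V_{ij}=2a_{ij}$. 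The goal is to rule out the second possibility whenever $W_{ij}\neq V_{ij}$, since then $W=V$, which (by fixing an index $j$ and reading off $\frac{w_i}{v_i}=\frac{w_j}{v_j}$ for all $i$) forces $v$ and $w$ to be proportional.

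The key step, and the one point where the argument is not purely formal, is to exploit the reciprocal structure by pairing the entry $(i,j)$ with its transpose $(j,i)$. Since $W$, $V$ and $A$ are all reciprocal, $W_{ji}=\frac{1}{W_{ij}}$, $V_{ji}=\frac{1}{V_{ij}}$ and $a_{ji}=\frac{1}{a_{ij}}$. Thus if $W_{ij}\neq V_{ij}$ then also $W_{ji}\neq V_{ji}$, so the hypothesis applied at both $(i,j)$ and $(j,i)$ yields the two equations $W_{ij}+V_{ij}=2a_{ij}$ and $\frac{1}{W_{ij}}+\frac{1}{V_{ij}}=\frac{2}{a_{ij}}$. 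Writing $p=W_{ij}$, $q=V_{ij}$, $a=a_{ij}$, the second equation gives $\frac{p+q}{pq}=\frac{2}{a}$, and substituting $p+q=2a$ yields $pq=a^{2}$; hence $p,q$ are the roots of $t^{2}-2at+a^{2}=(t-a)^{2}=0$, forcing $p=q=a$, which contradicts $W_{ij}\neq V_{ij}$.

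Therefore $W_{ij}=V_{ij}$ for every pair $i,j$, i.e. $W=V$, and proportionality of $v$ and $w$ follows as indicated above. The only subtlety to flag is that a single absolute-value equation at one entry is genuinely two-valued and does not by itself pin down $W_{ij}=V_{ij}$; it is the simultaneous use of the $(i,j)$ and $(j,i)$ entries, together with the reciprocal relations, that collapses both admissible branches onto $W_{ij}=a_{ij}=V_{ij}$. I expect this pairing observation to be the crux, with everything else being routine.
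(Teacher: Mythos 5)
Your proof is correct and takes essentially the same approach as the paper: both arguments pair the $(i,j)$ entry with the $(j,i)$ entry and use the reciprocal relations to force $\frac{w_i}{w_j}=\frac{v_i}{v_j}$ for every pair. The only cosmetic difference is the final algebra — you use Vieta's formulas ($p+q=2a$, $pq=a^{2}$ giving $(t-a)^{2}=0$), while the paper expands $\left(p+q\right)\left(\frac{1}{p}+\frac{1}{q}\right)=4$ to get $\frac{p}{q}+\frac{q}{p}=2$ — which is the same computation.
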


\begin{proof}
The "if" claim is trivial. Next we show the "only if" claim. Let $w=\left[
\begin{array}
[c]{ccc}%
w_{1} & \cdots & w_{n}%
\end{array}
\right]  ^{T}$ and $v=\left[
\begin{array}
[c]{ccc}%
v_{1} & \cdots & v_{n}%
\end{array}
\right]  ^{T}.$ Let $i,j\in\{1,\ldots,n\}$ with $i\neq j.$ Suppose that
\begin{equation}
\left\vert a_{ij}-\frac{w_{i}}{w_{j}}\right\vert =\left\vert a_{ij}%
-\frac{v_{i}}{v_{j}}\right\vert \text{ and }\left\vert \frac{1}{a_{ij}}%
-\frac{w_{j}}{w_{i}}\right\vert =\left\vert \frac{1}{a_{ij}}-\frac{v_{j}%
}{v_{i}}\right\vert . \label{mod}%
\end{equation}
If
\[
\left(  a_{ij}-\frac{w_{i}}{w_{j}}\right)  \left(  a_{ij}-\frac{v_{i}}{v_{j}%
}\right)  \geq0,
\]
then (\ref{mod}) implies $\frac{w_{i}}{w_{j}}=\frac{v_{i}}{v_{j}}.$ If
\[
\left(  a_{ij}-\frac{w_{i}}{w_{j}}\right)  \left(  a_{ij}-\frac{v_{i}}{v_{j}%
}\right)  <0
\]
then also
\[
\left(  \frac{1}{a_{ij}}-\frac{w_{j}}{w_{i}}\right)  \left(  \frac{1}{a_{ij}%
}-\frac{v_{j}}{v_{i}}\right)  <0,
\]
implying, from (\ref{mod}),
\[
a_{ij}=\frac{1}{2}\left(  \frac{w_{i}}{w_{j}}+\frac{v_{i}}{v_{j}}\right)
\text{ and }\frac{1}{a_{ij}}=\frac{1}{2}\left(  \frac{w_{j}}{w_{i}}%
+\frac{v_{j}}{v_{i}}\right)  .
\]
So%
\begin{align*}
4  &  =\left(  \frac{w_{i}}{w_{j}}+\frac{v_{i}}{v_{j}}\right)  \left(
\frac{w_{j}}{w_{i}}+\frac{v_{j}}{v_{i}}\right) \\
&  \Leftrightarrow2=\frac{w_{i}v_{j}}{w_{j}v_{i}}+\frac{w_{j}v_{i}}{w_{i}%
v_{j}}\Leftrightarrow\frac{w_{i}}{w_{j}}=\frac{v_{i}}{v_{j}}.
\end{align*}
Condition $\frac{w_{i}}{w_{j}}=\frac{v_{i}}{v_{j}},$ for all $i,j\in
\{1,\ldots,n\},$ implies $w$ and $v$ proportional.
\end{proof}

\bigskip

Note from the proof of Lemma \ref{lprop} that (\ref{mod}) holds for a pair
$i,j$ if and only if $\frac{w_{i}}{w_{j}}=\frac{v_{i}}{v_{j}}.$

\bigskip

We close this section with a topological property of $\mathcal{E}(A)$.

\begin{theorem}
For any $A\in\mathcal{PC}_{n},$ $\mathcal{E}(A)$ is a closed set.
\end{theorem}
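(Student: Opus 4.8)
The plan is to argue entirely through the digraph characterization of efficiency in Theorem \ref{blanq} rather than through the original definition, since strong connectivity behaves well under the limits we must control, whereas the entry-wise inequalities defining efficiency directly are awkward to pass to the limit. Let $(w^{(k)})_{k}$ be a sequence of vectors in $\mathcal{E}(A)$ converging to a positive vector $w$; the goal is to show $w\in\mathcal{E}(A)$. Since efficiency is only defined for positive vectors, closedness is understood within the positive orthant, on which every ratio $\frac{w_i}{w_j}$ is continuous. By Theorem \ref{blanq}, each $G(A,w^{(k)})$ is strongly connected, and it suffices to prove that $G(A,w)$ is strongly connected.

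The key observation is that the inequality $\frac{w_i}{w_j}\ge a_{ij}$ defining an edge is a weak (closed) inequality, so edges are preserved under limits: if a fixed edge $i\to j$ belongs to $G(A,w^{(k)})$ for all $k$ in some subsequence, then passing to the limit in $\frac{w_i^{(k)}}{w_j^{(k)}}\ge a_{ij}$ yields $\frac{w_i}{w_j}\ge a_{ij}$, so $i\to j$ is an edge of $G(A,w)$. Edges may additionally be gained in the limit in the boundary case $\frac{w_i}{w_j}=a_{ij}$, but extra edges only help strong connectivity.

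To exploit this I would use that there are only finitely many digraphs on the vertex set $\{1,\ldots,n\}$. Hence I can pass to a subsequence along which $G(A,w^{(k)})$ equals a single fixed digraph $G_0$; since every term of the subsequence is strongly connected, $G_0$ is strongly connected. By the observation above, every edge of $G_0$ is an edge of $G(A,w)$, that is, $G_0$ is a spanning subgraph of $G(A,w)$ on the same vertex set. A digraph that contains a strongly connected spanning subgraph is itself strongly connected, so $G(A,w)$ is strongly connected and therefore $w\in\mathcal{E}(A)$ by Theorem \ref{blanq}.

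The only delicate point is the boundary behaviour when $\frac{w_i}{w_j}=a_{ij}$ holds in the limit, where an edge present along the sequence could in principle fail to persist. The finiteness-plus-subsequence device sidesteps this entirely: it extracts a stable digraph $G_0$ whose edges are guaranteed to survive to $G(A,w)$, while the possible appearance of new edges is harmless. I therefore expect no substantial obstacle beyond organizing the argument carefully.
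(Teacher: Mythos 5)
Your proof is correct, and it rests on the same key tool as the paper's --- Theorem \ref{blanq} combined with the fact that the edge condition $\frac{w_i}{w_j}\geq a_{ij}$ is a weak (closed) inequality while the non-edge condition is strict (open) --- but it runs the topology in the opposite direction. The paper proves that the set of \emph{inefficient} positive vectors is open: if $G(A,v)$ is not strongly connected, then any sufficiently small perturbation $\widetilde{v}$ of $v$ preserves every non-edge of $G(A,v)$ (with smallness chosen uniformly over the finitely many pairs $i,j$), so $G(A,\widetilde{v})$ has no more edges than $G(A,v)$ and hence is also not strongly connected. You instead prove sequential closedness directly: given efficient $w^{(k)}\to w$ with $w$ positive, you use the finiteness of the set of digraphs on $\{1,\ldots,n\}$ to extract a subsequence along which $G(A,w^{(k)})$ is one fixed strongly connected digraph $G_{0}$, then pass the weak inequalities to the limit to conclude that $G_{0}$ is a spanning subgraph of $G(A,w)$, which forces $G(A,w)$ to be strongly connected. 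The two arguments are dual: your pigeonhole/subsequence device is what neatly disposes of the boundary case $\frac{w_i}{w_j}=a_{ij}$ (where edges can only be gained in the limit, never needed ones lost), whereas the paper's perturbation argument avoids subsequences but tacitly requires the perturbation radius to work simultaneously for all non-edges. Both readings correctly interpret the statement as closedness relative to the positive orthant, a point you make explicit and the paper leaves implicit.
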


\begin{proof}
We verify this by showing that the inefficient vectors, in the complementary
of $\mathcal{E}(A),$ form an open set, by appealing to Theorem \ref{blanq}.
Suppose that $v\notin\mathcal{E}(A),$ then the graph $G(A,v)$ is not strongly
connected. Let $\widetilde{v}$ be a sufficiently small perturbation of $v$
(i.e. $\widetilde{v}$ lies in an open ball about $v,$ whose radius is
positive, but as small as we like). Then, if $i\rightarrow j$ is not an edge
of $G(A,v),$ then it is not an edge of $G(A,\widetilde{v}).$ Then
$G(A,\widetilde{v})$ has no more edges (under inclusion) than $G(A,v).$ Since
the latter was not strongly connected, the former also is not, so that
$\widetilde{v}\notin\mathcal{E}(A).$
\end{proof}

\bigskip

We also note that, if $w\in\mathcal{E}(A)$ and the matrix $D(A,w)$ has no $0$
off-diagonal entries$,$ then $\widetilde{w}\in\mathcal{E}(A)$ for any
sufficiently small perturbation $\widetilde{w}$ of $w,$ and $w\in
\mathcal{E}(\widetilde{A})$ for any sufficiently small reciprocal perturbation
$\widetilde{A}$ of $A.$

\section{Inductive construction of efficient vectors\label{s4}}

\bigskip Suppose that $A\in\mathcal{PC}_{n}$ and that $w\in\mathcal{E}(A(n)).$
Then $G(A(n),w)$ is strongly connected. May $w$ be extended to an efficient
vector for $A,$ and, if so, how? For a positive scalar $x,$ the vector
$w_{x}:=\left[
\begin{array}
[c]{c}%
w\\
x
\end{array}
\right]  \in\mathcal{E}(A)$ if and only if $G\left(  A,w_{x}\right)  $ is
strongly connected. But, since the subgraph induced by vertices $1,2,\ldots
,n-1$ of $G\left(  A,w_{x}\right)  $ is $G(A(n),w)$ and the latter is strongly
connected, $G\left(  A,w_{x}\right)  $ is strongly connected if and only if
there are edges from vertex $n$ to vertices in $G(A(n),w)$ and also edges from
the latter to $n$ (see Proposition 3 in \cite{CFF}). Since the vector of the
first $n-1$ entries of the last column of $D\left(  A,w_{x}\right)  $ is
$\frac{1}{x}w$ less the vector of the first $n-1$ entries of $a_{n}$ (the last
column of $A)$, there are such edges if and only if this difference vector has
a $0$ entry or both positive and negative entries. This means that among
$\frac{w_{i}}{x}-a_{in},$ $i=1,\ldots,n-1,$ there are both nonnegative and
nonpositive numbers. We restate this as

\begin{theorem}
\label{thext}For $A\in\mathcal{PC}_{n}$ and $w\in\mathcal{E}(A(n)),$ the
vector
\[
\left[
\begin{array}
[c]{c}%
w\\
x
\end{array}
\right]  \in\mathcal{E}(A)
\]
if and only if the scalar $x$ satisfies
\[
x\in\left[  \min_{1\leq i\leq n-1}\frac{w_{i}}{a_{in}},\quad\max_{1\leq i\leq
n-1}\frac{w_{i}}{a_{in}}\right]  .
\]

\end{theorem}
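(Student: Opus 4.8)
The plan is to apply the graph-theoretic characterization of efficiency (Theorem \ref{blanq}) and reduce the question to the connectivity contributed by the single new vertex $n$. By Theorem \ref{blanq}, the extended vector $w_{x}$ is efficient for $A$ if and only if $G(A,w_{x})$ is strongly connected, so the entire problem becomes one about this digraph.

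First I would observe that the subgraph of $G(A,w_{x})$ induced by the vertices $\{1,\ldots,n-1\}$ coincides exactly with $G(A(n),w)$: for $i,j\leq n-1$ the edge condition $\frac{w_{i}}{w_{j}}\geq a_{ij}$ does not involve $x$ and uses only the entries of $A(n)$ and of $w$. Since $w\in\mathcal{E}(A(n))$, this induced subgraph is strongly connected, again by Theorem \ref{blanq}. I can then invoke Proposition 3 of \cite{CFF}, which says that augmenting a strongly connected digraph by one vertex yields a strongly connected digraph precisely when the new vertex has at least one incoming and at least one outgoing edge to the old vertex set. This collapses the criterion to the existence of an edge $n\rightarrow i$ and of an edge $j\rightarrow n$ for some $i,j\leq n-1$.

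Next I would translate these two edge conditions into inequalities on $x$, using reciprocity $a_{ni}=\frac{1}{a_{in}}$. The edge $n\rightarrow i$ exists iff $\frac{x}{w_{i}}\geq a_{ni}=\frac{1}{a_{in}}$, i.e. $x\geq\frac{w_{i}}{a_{in}}$; hence some such edge exists iff $x\geq\min_{1\leq i\leq n-1}\frac{w_{i}}{a_{in}}$. Symmetrically, the edge $j\rightarrow n$ exists iff $\frac{w_{j}}{x}\geq a_{jn}$, i.e. $x\leq\frac{w_{j}}{a_{jn}}$; hence some such edge exists iff $x\leq\max_{1\leq j\leq n-1}\frac{w_{j}}{a_{jn}}$. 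Intersecting the two conditions gives exactly the stated interval. This is the same content as the remark preceding the theorem, namely that the difference vector with entries $\frac{w_{i}}{x}-a_{in}$ must have both a nonnegative and a nonpositive entry, now expressed through its endpoints.

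The computations here are routine, so the only real care needed is in the two reductions: confirming that the induced subgraph is literally $G(A(n),w)$ and not merely isomorphic to it, and checking that Proposition 3 of \cite{CFF} applies under the hypothesis that the induced subgraph is already strongly connected. The main obstacle, to the extent there is one, is keeping the inequality directions consistent through the reciprocal relation $a_{ni}=\frac{1}{a_{in}}$, since a slip there would interchange the roles of $\min$ and $\max$ and flip the resulting interval.
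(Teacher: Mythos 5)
Your proposal is correct and follows essentially the same route as the paper: reduce efficiency of the extended vector to strong connectivity of $G(A,w_{x})$ via Theorem \ref{blanq}, note that the induced subgraph on $\{1,\ldots,n-1\}$ is exactly $G(A(n),w)$ and apply Proposition 3 of \cite{CFF}, then translate the existence of an edge into and out of vertex $n$ into the two inequalities bounding $x$. Your explicit derivation of $x\geq\min_{i}\frac{w_{i}}{a_{in}}$ and $x\leq\max_{i}\frac{w_{i}}{a_{in}}$ is just a spelled-out version of the paper's sign condition on the entries $\frac{w_{i}}{x}-a_{in}$, with the reciprocal relation handled correctly.
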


Of course, the above interval is nonempty. This leads to a natural algorithm
to construct a large subset of $\mathcal{E}(A)$ for $A\in\mathcal{PC}_{n}.$

Choose the upper left $2\times2$ principal submatrix $A[\{1,2\}]$ of $A.$ It
is consistent and, up to a factor of scale, has only one efficient vector
$w[\{1,2\}].$ Now extend this vector, in all possible ways, to an efficient
vector for $A[\{1,2,3\}],$ according to Theorem \ref{thext}. This gives the
set $w[\{1,2,3\}]\subseteq\mathcal{E}(A[\{1,2,3\}]).$ Now, continue extending
each vector in $w[\{1,2,3\}]$ to an element of $\mathcal{E}(A[\{1,2,3,4\}])$
in the same way, and so on. This terminates in a subset $w[\{1,2,\ldots
,n\}]\subseteq\mathcal{E}(A).$

We make two important observations. First, we may instead start with some
other $2$-by-$2$ principal submatrix $A[\{i,j\}],$ $i\neq j,$ and proceed
similarly, either by inserting the new entry of the next efficient vector in
the appropriate position, or by placing $A[\{i,j\}]$ in the upper left
$2$-by-$2$ submatrix, via permutation similarity, and proceeding in exactly
the same way. We note that starting in two different positions may produce
different terminal sets (Example \ref{ex1}), and the union of all possible
terminal sets is contained in $\mathcal{E}(A).$

Second, $w[\{1,2,\ldots,n\}]$ may be a proper subset of $\mathcal{E}(A),$ as
truncation of a vector (deletion of an entry) from an efficient vector for $A$
may not give an efficient vector for the corresponding principal submatrix
(see Example \ref{ex1}).

\begin{example}
\label{ex1}Let%
\[
A=\left[
\begin{array}
[c]{ccc}%
1 & 1 & \frac{3}{2}\\
1 & 1 & 1\\
\frac{2}{3} & 1 & 1
\end{array}
\right]  .
\]
The efficient vectors for $A[\{1,2\}]$ are proportional to%
\[
\left[
\begin{array}
[c]{cc}%
1 & 1
\end{array}
\right]  ^{T}.
\]
By Theorem \ref{thext}, the vectors of the form
\[
\left[
\begin{array}
[c]{ccc}%
1 & 1 & w_{3}%
\end{array}
\right]  ^{T}%
\]
with
\[
\min\left\{  \frac{2}{3},1\right\}  \leq w_{3}\leq\max\left\{  \frac{2}%
{3},1\right\}  \quad\Leftrightarrow\quad\frac{2}{3}\leq w_{3}\leq1
\]
are efficient for $A$ (and, of course, all positive vectors proportional to
them)$.$

The efficient vectors for $A[\{1,3\}]$ are proportional to%
\[
\left[
\begin{array}
[c]{cc}%
\frac{3}{2} & 1
\end{array}
\right]  ^{T}.
\]
By Theorem \ref{thext}, the vectors of the form
\[
\left[
\begin{array}
[c]{ccc}%
\frac{3}{2} & w_{2} & 1
\end{array}
\right]  ^{T}%
\]
with
\[
\min\left\{  \frac{3}{2},1\right\}  \leq w_{2}\leq\max\left\{  \frac{3}%
{2},1\right\}  \quad\Leftrightarrow\quad1\leq w_{2}\leq\frac{3}{2}%
\]
are efficient for $A.$

The efficient vectors for $A[\{2,3\}]$ are proportional to%
\[
\left[
\begin{array}
[c]{cc}%
1 & 1
\end{array}
\right]  ^{T}.
\]
By Theorem \ref{thext}, the vectors of the form
\[
\left[
\begin{array}
[c]{ccc}%
w_{1} & 1 & 1
\end{array}
\right]  ^{T},
\]
with
\[
\min\left\{  1,\frac{3}{2}\right\}  \leq w_{1}\leq\max\left\{  1,\frac{3}%
{2}\right\}  \quad\Leftrightarrow\quad1\leq w_{1}\leq\frac{3}{2},
\]
are efficient for $A.$ 

Note that, by Theorem \ref{tmain},
\[
\mathcal{E}(A)=\left\{  \left[
\begin{array}
[c]{ccc}%
w_{1} & w_{2} & w_{3}%
\end{array}
\right]  ^{T}:w_{3}\leq w_{2}\leq w_{1}\leq\frac{3}{2}w_{3}\right\}  .
\]
For example, the vector $\left[
\begin{array}
[c]{ccc}%
\frac{4}{3} & \frac{7}{6} & 1
\end{array}
\right]  ^{T}$ is efficient for $A,$ though it does not belong to the set of
vectors determined above, as no vector obtained from it by deleting one entry
is efficient for the corresponding $2$-by-$2$ principal submatrix.
\end{example}

There are cases in which we know all the efficient vectors for a larger
submatrix and then we can start our building process with this submatrix. In
fact, taking into account Theorem \ref{tmain}, all efficient vectors for a
$3$-by-$3$ reciprocal matrix are known, as such matrix is a simple perturbed
consistent matrix. Thus, it is always possible to start the process from a
$3$-by-$3$ principal submatrix.

\begin{example}
\label{ex2}Consider the matrix
\begin{equation}
A=\left[
\begin{array}
[c]{ccccc}%
1 & 1 & 9 & 4 & \frac{1}{2}\\
1 & 1 & 1 & 4 & 3\\
\frac{1}{9} & 1 & 1 & 1 & \frac{1}{3}\\
\frac{1}{4} & \frac{1}{4} & 1 & 1 & 2\\
2 & \frac{1}{3} & 3 & \frac{1}{2} & 1
\end{array}
\right]  . \label{A}%
\end{equation}
By Theorem \ref{tmain}, the efficient vectors for $A[\{1,2,3\}]$ are the
vectors of the form%
\[
\left[
\begin{array}
[c]{ccc}%
w_{1} & w_{2} & w_{3}%
\end{array}
\right]  ^{T},
\]
with $w_{3}\leq w_{2}\leq w_{1}\leq9w_{3}.$ By Theorem \ref{thext}, the
vectors of the form
\[
\left[
\begin{array}
[c]{cccc}%
w_{1} & w_{2} & w_{3} & w_{4}%
\end{array}
\right]  ^{T},
\]
with
\begin{align*}
\min\left\{  \frac{w_{1}}{4},\frac{w_{2}}{4},w_{3}\right\}   &  \leq w_{4}%
\leq\max\left\{  \frac{w_{1}}{4},\frac{w_{2}}{4},w_{3}\right\}
\Leftrightarrow\\
\min\left\{  \frac{w_{2}}{4},w_{3}\right\}   &  \leq w_{4}\leq\max\left\{
\frac{w_{1}}{4},w_{3}\right\}
\end{align*}
are efficient for $A[\{1,2,3,4\}].$ Again by Theorem \ref{thext}, the vectors
of the form
\[
\left[
\begin{array}
[c]{ccccc}%
w_{1} & w_{2} & w_{3} & w_{4} & w_{5}%
\end{array}
\right]  ^{T}%
\]
with
\begin{align*}
\min\left\{  2w_{1},\frac{w_{2}}{3},3w_{3},\frac{w_{4}}{2}\right\}   &  \leq
w_{5}\leq\max\left\{  2w_{1},\frac{w_{2}}{3},3w_{3},\frac{w_{4}}{2}\right\}
\Leftrightarrow\\
\min\left\{  \frac{w_{2}}{3},\frac{w_{4}}{2}\right\}   &  \leq w_{5}\leq
\max\left\{  2w_{1},3w_{3}\right\}  ,
\end{align*}
are efficient for $A.$ For instance, the vectors%
\[
\left[
\begin{array}
[c]{ccccc}%
3 & 2 & 1 & \frac{3}{4} & w_{5}%
\end{array}
\right]  ^{T}%
\]
with $\frac{3}{8}\leq w_{5}\leq6,$ are efficient for $A.$ Moreover, these are
the only efficient vectors with the given first four entries.
\end{example}

We observe that, if $A\in\mathcal{PC}_{n}$ is a (inconsistent) simple
perturbed consistent matrix, then $A$ has a principal $3$-by-$3$
(inconsistent) simple perturbed consistent submatrix $B.$ If we start the
inductive construction of efficient vectors for $A$ with the submatrix $B,$
for which $\mathcal{E}(B)$ is known by Lemma \ref{lsim} and Theorem
\ref{tmain}, then we obtain $\mathcal{E}(A).$ This fact follows from Corollary
9 in \cite{CFF}, taking into account that, by Lemma \ref{lsim}, and Remark 4.6
in \cite{Fu22}, we may focus on $A=Z_{n}(x),$ for some $x>0.$

Similarly, if $A\in\mathcal{PC}_{n}$ is a double perturbed consistent matrix
(that is, $A\ $is obtained from a consistent matrix by modifying two entries
above the diagonal and the corresponding reciprocal entries), in which no two
perturbed entries lie in the same row and column, then $A$ has a principal
$4$-by-$4$ double perturbed consistent submatrix $B$ of the same type and, by
Theorem 4.2 in \cite{Fu22} and Lemma \ref{lsim}, $\mathcal{E}(B)$ is known. By
Corollary 4.5 in \cite{Fu22}, if we start the inductive construction of
efficient vectors with $B,$ then again we obtain $\mathcal{E}(A).$

Of course, in these simple and double perturbed consistent cases, all the
efficient vectors for $A\in\mathcal{PC}_{n}$ are already known (Theorem
\ref{tmain}, and Theorem 4.2 in \cite{Fu22}).

\section{Columns of a reciprocal matrix\label{s5}}

Previously, it has been noted (Theorem \ref{theogmallcolumns}) that the
Hadamard geometric mean of all columns of $A\in\mathcal{PC}_{n}$ is efficient
for $A$. Interestingly, each individual column of $A$ is efficient.

\begin{lemma}
\label{lcol}Let $A\in\mathcal{PC}_{n}.$ Then any column of $A$ lies in
$\mathcal{E}(A).$
\end{lemma}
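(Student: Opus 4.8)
The plan is to use the graph-theoretic characterization of efficiency (Theorem \ref{blanq}): a positive vector $w$ is efficient for $A$ if and only if $G(A,w)$ is strongly connected. So I would fix a column index, say $k$, set $w = a_k$ (the $k$-th column of $A$), and show that $G(A,a_k)$ is strongly connected. The key computation is to understand the edge set of $G(A,a_k)$: an edge $i \rightarrow j$ (with $i \neq j$) is present precisely when $\frac{(a_k)_i}{(a_k)_j} \geq a_{ij}$, i.e. $\frac{a_{ik}}{a_{jk}} \geq a_{ij}$.

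The central observation I would exploit is that, using reciprocity, $\frac{a_{ik}}{a_{jk}} = a_{ik} a_{kj}$, so the edge condition $i \rightarrow j$ reads $a_{ik} a_{kj} \geq a_{ij}$. Now consider any pair $i \neq j$ and look at the two possible edges $i \rightarrow j$ and $j \rightarrow i$. The edge $j \rightarrow i$ requires $a_{jk} a_{ki} \geq a_{ji} = \frac{1}{a_{ij}}$, which by reciprocity is equivalent to $a_{ik} a_{kj} \leq a_{ij}$. Hence for any pair $i,j$, at least one of the two edges $i \rightarrow j$, $j \rightarrow i$ is always present (and both are present exactly when $a_{ik}a_{kj} = a_{ij}$, i.e. consistency holds on the triple $i,j,k$). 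So $G(A,a_k)$ is a (semi)complete digraph: between every pair of distinct vertices there is at least one directed edge.

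With that structure in hand, strong connectivity becomes easy through the special role of vertex $k$. I would check that vertex $k$ is connected to and from every other vertex directly: the edge $k \rightarrow j$ requires $a_{kk} a_{kj} \geq a_{kj}$, i.e. $a_{kj} \geq a_{kj}$, which always holds; and symmetrically $j \rightarrow k$ requires $a_{jk}a_{kk} \geq a_{jk}$, which also always holds. Thus $k$ has edges both to and from every other vertex. Consequently, for any two vertices $i,j$ there is a directed path $i \rightarrow k \rightarrow j$, so $G(A,a_k)$ is strongly connected and $a_k$ is efficient. Finally, to handle an arbitrary column rather than only the first, I would either argue in this index-free way (which I have done above, for general $k$), or invoke Lemma \ref{lsim} with a permutation similarity moving column $k$ to column $1$.

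I do not anticipate a serious obstacle: the main point is simply the reciprocity identity $\frac{a_{ik}}{a_{jk}} = a_{ik}a_{kj}$ that converts the edge condition into a symmetric comparison with $a_{ij}$, plus the trivial fact that $a_{kk}=1$ forces all edges into and out of vertex $k$. The only thing requiring a little care is making sure the diagonal-vertex edges are counted correctly and that the "at least one of the two edges" claim is stated cleanly; everything else is immediate from Theorem \ref{blanq}.
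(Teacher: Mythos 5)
Your proof is correct and follows essentially the same route as the paper: both invoke Theorem \ref{blanq} and observe that, since $a_{kk}=1$, vertex $k$ has edges to and from every other vertex in $G(A,a_k)$ (the paper phrases this as the $k$-th column of $D(A,a_k)$ vanishing, yielding a bidirected star), so the digraph is strongly connected. Your additional observation that $G(A,a_k)$ is semicomplete is correct but not needed for the argument.
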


\begin{proof}
Let $a_{j}$ be the $j$-th column of $A.$ Then the $j$-th column of
$D(A,a_{j})$ has entries $\frac{a_{ij}}{1}-a_{ij}=0$. Hence, $G(A,a_{j})$ has
an undirected star on $n$ vertices as an induced subgraph and is, therefore,
strongly connected, verifying that $a_{j}$ is efficient, by Theorem
\ref{blanq}.
\end{proof}

\bigskip

Further, the geometric mean of any subset of the columns of a reciprocal
matrix $A$ also lies in $\mathcal{E}(A).$ To prove this result, we use the
following lemma.

\begin{lemma}
\label{col1}Let $A\in\mathcal{PC}_{n},$ $D=\operatorname*{diag}(d_{1}%
,\ldots,d_{n})$ be a positive diagonal matrix and $1\leq s\leq n.$ If $w$ is
the geometric mean of $s$ columns of $A$ then $Dw$ is a positive multiple of
the geometric mean of the corresponding $s$ columns of $DAD^{-1}.$
\end{lemma}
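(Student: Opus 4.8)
The plan is to compute the geometric mean of the selected columns of $DAD^{-1}$ entry by entry and to show that the dependence on the row index factors out as multiplication by $D$, while the remaining column-index dependence collapses into a single positive scalar. First I would fix the indices of the chosen columns, say $j_{1},\ldots,j_{s}$, so that the $i$-th entry of $w$ is
\[
w_{i}=\left(  \prod_{k=1}^{s}a_{ij_{k}}\right)  ^{1/s}.
\]

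Next I would write $B:=DAD^{-1}=[b_{ij}]$ and record the elementary fact that $b_{ij}=\frac{d_{i}}{d_{j}}a_{ij}$. Since the same index set $j_{1},\ldots,j_{s}$ picks out the corresponding columns of $B$, the $i$-th entry of the geometric mean $w^{\prime}$ of these columns is
\[
w_{i}^{\prime}=\left(  \prod_{k=1}^{s}b_{ij_{k}}\right)  ^{1/s}=\left(
\prod_{k=1}^{s}\frac{d_{i}}{d_{j_{k}}}a_{ij_{k}}\right)  ^{1/s}.
\]

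The key step is to separate the product inside the $s$-th root into three pieces. The factor $d_{i}$ occurs once in each of the $s$ terms, contributing $d_{i}^{s}$, whose $s$-th root is $d_{i}$; the factors $d_{j_{k}}^{-1}$ contribute $\left(  \prod_{k=1}^{s}d_{j_{k}}\right)  ^{-1/s}$, a positive constant $c$ that does not depend on $i$; and the surviving product of the $a_{ij_{k}}$ yields exactly $w_{i}$. Hence $w_{i}^{\prime}=c\,d_{i}\,w_{i}=c\,(Dw)_{i}$ for every $i$, so $w^{\prime}=c\,Dw$ and therefore $Dw=\frac{1}{c}w^{\prime}$ is a positive multiple of $w^{\prime}$, as claimed.

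There is no genuine obstacle here; the statement is essentially bookkeeping about where the diagonal scaling lands. The one point that deserves care is recognizing that, because the set of selected column indices is identical for $A$ and for $DAD^{-1}$, all of the $d_{j_{k}}^{-1}$ factors are constants over the rows and thus merge into the single scalar $c$, whereas the $d_{i}$ factor, which varies with the row index $i$, is precisely what realizes the left multiplication by $D$.
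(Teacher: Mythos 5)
Your proof is correct and follows essentially the same route as the paper's: an entrywise computation showing that the $i$-th entry of the geometric mean of the selected columns of $DAD^{-1}$ equals $d_i w_i$ times a constant $\left(\prod_{k=1}^{s} d_{j_k}\right)^{-1/s}$ independent of $i$. The only (immaterial) difference is that the paper first invokes a permutation similarity and Lemma \ref{lsim} to assume the chosen columns are the first $s$, whereas you carry the arbitrary index set $j_1,\ldots,j_s$ through the computation directly, which is if anything slightly cleaner.
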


\begin{proof}
By a possible permutation similarity and taking into account Lemma \ref{lsim},
suppose, without loss of generality, that $w$ is the geometric mean of the
first $s$ columns of $A$. (Note that, if $w$ is the geometric mean of a
reciprocal matrix $A$ then $Pw$ is the geometric mean of $PAP^{T}$ for a
permutation matrix $P.$) The $i$-th entry of $Dw$ is $d_{i}\Pi_{j=1}^{s}%
a_{ij}^{\frac{1}{s}}$. On the other hand, the $i$-th entry of the geometric
mean $v$ of the first $s$ columns of $DAD^{-1}$ is $\Pi_{j=1}^{s}\left(
\frac{d_{i}a_{ij}}{d_{j}}\right)  ^{\frac{1}{s}}=d_{i}\Pi_{j=1}^{s}\left(
\frac{a_{ij}}{d_{j}}\right)  ^{\frac{1}{s}}.$ Thus, the quotient of the $i$-th
entries of $Dw$ and $v$ is $\Pi_{j=1}^{s}(d_{j})^{\frac{1}{s}},$ which does
not depend on $i,$ implying the claim.
\end{proof}

\begin{theorem}
\label{thcol}Let $A\in\mathcal{PC}_{n}.$ Then the geometric mean of any
collection of distinct columns of $A$ lies in $\mathcal{E}(A).$
\end{theorem}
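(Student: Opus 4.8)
The plan is to show that the geometric mean $w$ of any collection of $s$ distinct columns is efficient by verifying, via Theorem \ref{blanq}, that $G(A,w)$ is strongly connected. By Lemma \ref{col1} I may simplify the setting with a positive diagonal similarity: choosing $D$ appropriately lets me normalize the situation without changing the efficiency question (since $Dw$ is, up to scale, the geometric mean of the corresponding columns of $DAD^{-1}$, and by Lemma \ref{lsim} efficiency is preserved). By a permutation similarity (again harmless by Lemma \ref{lsim}) I assume the chosen columns are the first $s$, so $w_i = \left(\prod_{j=1}^{s} a_{ij}\right)^{1/s}$.

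The key computation is to understand the sign pattern of the off-diagonal entries of $D(A,w) = ww^{(-T)} - A$, since an edge $i \rightarrow j$ is present in $G(A,w)$ exactly when $\frac{w_i}{w_j} \geq a_{ij}$, i.e. when the $(i,j)$ entry of $D(A,w)$ is nonnegative. First I would record that $\frac{w_i}{w_j} = \left(\prod_{k=1}^{s} \frac{a_{ik}}{a_{jk}}\right)^{1/s}$, which by reciprocity ($a_{jk} = 1/a_{kj}$) and consistency-type manipulations I can compare against $a_{ij}$. The cleanest route is to exploit the columns themselves: I would argue that for each fixed pair $i,j$, among the $s$ chosen column indices there must be at least one index $k$ for which $\frac{a_{ik}}{a_{jk}} \geq a_{ij}$ (equivalently $a_{ik} \geq a_{ij}a_{jk}$) and at least one index $k'$ for which the reverse inequality holds, unless all are equal. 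This is because the geometric mean of the ratios $\frac{a_{ik}}{a_{jk}}$ over $k$ cannot lie strictly on one side of $a_{ij}$ while every individual column contributes consistently; I expect to derive a concrete edge from whichever chosen column attains the relevant extreme ratio.

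The most direct way to guarantee strong connectivity is to show that each individual chosen column $a_j$ already supplies a strongly connected spanning subgraph structure and then argue their geometric mean inherits enough edges. Concretely, by Lemma \ref{lcol} each column $a_t$ (for $t$ in the chosen set) is efficient, so $G(A,a_t)$ is strongly connected and contains the undirected star centered at $t$. I would then show that $G(A,w)$ contains, for each chosen center $t$, all edges $t \rightarrow i$ with $a_{it} \le w_i/w_t$ and $i \rightarrow t$ with $w_t/w_i \ge a_{ti}$; the point is that since $w$ is a geometric mean, for any vertex $i$ the quantity $\frac{w_i}{w_t}$ compared with $a_{it}$ cannot fail both the incoming and outgoing inequality simultaneously across all chosen $t$. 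Combining the edges inherited from the various centers $t$ in the chosen set yields enough connectivity; in particular I expect that $G(A,w)$ contains the edges of at least one star, or a union of partial stars whose overlap forces strong connectivity.

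The hard part will be the combinatorial/analytic step that turns the geometric-mean structure into guaranteed edges: showing that the averaging over $s$ columns cannot push \emph{all} the relevant ratios strictly to one side, so that the required nonnegativity $\frac{w_i}{w_j} \ge a_{ij}$ holds for enough pairs to make $G(A,w)$ strongly connected. I anticipate handling this by a min/max argument on the per-column ratios $\frac{a_{ik}}{a_{jk}a_{ij}}$ (whose product over a full column set relates to reciprocity), extracting an explicit edge from the column achieving the maximum and another from the column achieving the minimum, and then checking that these extracted edges, taken over all vertices, assemble into a strongly connected digraph. The reductions via Lemmas \ref{lsim} and \ref{col1} are routine; the genuine content lies in this sign-pattern argument.
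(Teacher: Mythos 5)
Your framework (Theorem \ref{blanq} plus the reductions via Lemmas \ref{lsim} and \ref{col1}) is the right one, but the combinatorial core of your argument --- the part you yourself defer as ``the hard part'' --- is not only unproven, it is false as stated. Your claim that for \emph{every} pair $i,j$ the ratios $\frac{a_{ik}}{a_{jk}}$, $k$ ranging over the chosen set $S$, must straddle $a_{ij}$ holds automatically only when $i\in S$ or $j\in S$ (the ratio at $k=i$ or $k=j$ is exactly $a_{ij}$); it fails when both $i,j$ lie outside $S$. Concretely, take $S=\{1,2\}$ with $a_{31}=2$, $a_{32}=3$, $a_{41}=a_{42}=a_{34}=1$: both ratios $\frac{a_{3k}}{a_{4k}}$ equal $2$ and $3$, strictly above $a_{34}$, so no index gives the reverse inequality, and the edge $4\rightarrow 3$ is simply absent from $G(A,w)$. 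Efficiency survives because strong connectivity does not require edges in both directions for every pair --- which is exactly why a per-pair argument is the wrong target. Your fallback is also unavailable: $G(A,w)$ contains a full undirected star centered at $t$ only if $\frac{w_i}{w_t}=a_{it}$ for all $i$, i.e., only if $w$ is proportional to column $t$, which fails for a genuine geometric mean of several columns of an inconsistent matrix.

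The ideas you are missing are two. First, the correct product identity is per \emph{vertex}, not per pair: since $A[S]$ is reciprocal, the product of all its entries is $1$, so $\prod_{k\in S}w_k=1$, and hence for every vertex $i$ we have $\prod_{k\in S}\frac{w_i}{w_k\,a_{ik}}=\frac{w_i^s}{\left(\prod_{k\in S}w_k\right)\left(\prod_{k\in S}a_{ik}\right)}=1$; these factors therefore straddle $1$, yielding an edge from $i$ into $S$ and an edge from $S$ into $i$. Second, one still needs strong connectivity \emph{inside} $S$, and no sign-counting of this type supplies it. The paper handles this by induction on $n$: using Lemmas \ref{lsim} and \ref{col1} it normalizes $A$ (possible since $s<n$) so that the last row and column are all ones, writes $A$ as a bordered matrix with leading block $B$, applies the induction hypothesis to conclude $G(B,w_B)$ is strongly connected, and then the only remaining check --- that $w_B-\mathbf{e}$ is neither strictly positive nor strictly negative --- is precisely the per-vertex identity above applied at vertex $n$. (Equivalently, one could argue directly that the restriction of $w$ to $S$ is the geometric mean of \emph{all} columns of $A[S]$, hence efficient for $A[S]$ by Theorem \ref{theogmallcolumns}, and then attach the outside vertices via the per-vertex identity.) Either way, these load-bearing steps are exactly the ones your sketch leaves open, and the route you propose for filling them --- a min/max argument on per-pair ratios --- aims at a statement that is false.
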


\begin{proof}
Let $1\leq s\leq n.$ We show that the geometric mean $w_{A}$ of $s$ distinct
columns of $A$ is efficient for $A.$ The proof is by induction on $n$. For
$n=2,$ the result is straightforward. Suppose that $n>2.$ If $s=1$ or $s=n,$
the result follows from Lemma \ref{lcol} or Theorem \ref{theogmallcolumns},
respectively. Suppose that $1<s<n.$ By Lemmas \ref{lsim} and \ref{col1}, we
may and do assume that the $s$ columns of $A$ are the first ones, and the
entries in the last column and in the last row of $A$ are all equal to $1$,
that is,
\[
A=\left[
\begin{array}
[c]{cc}%
B & \mathbf{e}\\
\mathbf{e}^{T} & 1
\end{array}
\right]  ,
\]
where $\mathbf{e}$ is the $(n-1)$-vector with all entries equal to $1$ and
$B\in\mathcal{PC}_{n-1}$. Let $b_{1},\ldots,b_{s}$ be the first $s$ columns of
$B,$ so that, for $j=1,\ldots,s,$
\[
a_{j}=\left[
\begin{array}
[c]{c}%
b_{j}\\
1
\end{array}
\right]  .
\]
We have%
\[
D(A,w_{A})=\left[
\begin{array}
[c]{cc}%
D(B,w_{B}) & w_{B}-\mathbf{e}\\
w_{B}^{(-T)}-\mathbf{e}^{T} & 1
\end{array}
\right]  ,
\]
in which $w_{B}$ is the geometric mean of columns $b_{1},\ldots,b_{s}.$ By the
induction hypothesis, $w_{B}$ is efficient for $B$ so that, by Theorem
\ref{blanq}, $G(B,w_{B})$ is strongly connected. Thus, $G(A,w_{A})$ is
strongly connected if and only if there are edges from vertex $n$ to vertices
in $G(B,w_{B})$ and also vertices from the latter to $n$ (see the observation
before Theorem \ref{thext}). Then, $G(A,w_{A})$ is strongly connected if and
only if $w_{B}-\mathbf{e}$ is neither strictly positive nor negative. The
product of the first $s$ entries of $w_{B}$ is $l^{\frac{1}{s}}$, where $l$ is
the product of the entries of $B[\{1,\ldots,s\}].$ Since this matrix is
reciprocal, then $l=1.$ Thus, the vector formed by the first $s$ entries of
$w_{B}$ is neither strictly greater than $1$ nor strictly less than $1$,
implying that $G(A,w_{A})$ is strongly connected.
\end{proof}

\bigskip We observe that we have $2^{n}-1$ (nonempty) distinct subsets of
columns of $A\in\mathcal{PC}_{n}$ (not necessarily corresponding to different
geometric means).

\bigskip The sets of efficient vectors for matrices in $\mathcal{PC}_{2}$ (any
$2$-by-$2$ reciprocal matrix is consistent) and in $\mathcal{PC}_{3}$ (any
$3$-by-$3$ reciprocal matrix is a simple perturbed consistent matrix) are
closed under geometric means. In the latter case, this follows from Lemma
\ref{lsim} and the facts that a matrix in $\mathcal{PC}_{3}$ is monomial
similar to $Z_{3}(x),$ for some $x>0$, and, by Theorem \ref{tmain}, the set of
efficient vectors for $Z_{3}(x)$ is closed under geometric mean$.$ However,
the set of efficient vectors for matrices in $\mathcal{PC}_{n},$ with $n>3,$
may not be closed under geometric mean, as the next example illustrates.

\begin{example}
\label{exgm}Let $A$ be the matrix in (\ref{A}). Let
\[
A^{\prime}=A(5)=\left[
\begin{array}
[c]{cccc}%
1 & 1 & 9 & 4\\
1 & 1 & 1 & 4\\
\frac{1}{9} & 1 & 1 & 1\\
\frac{1}{4} & \frac{1}{4} & 1 & 1
\end{array}
\right]  ,
\]
the $4$-by-$4$ principal submatrix of $A$ obtained by deleting the $5$-th row
and column. Taking into account Example \ref{ex2}, the vectors
\[
w=\left[
\begin{array}
[c]{c}%
4.1\\
4.1\\
1\\
1
\end{array}
\right]  \text{ and }v=\left[
\begin{array}
[c]{c}%
4.2\\
4\\
3\\
1
\end{array}
\right]
\]
are efficient for $A^{\prime}.$ However the vector $\left(  w\circ v\right)
^{\left(  \frac{1}{2}\right)  }$ is not efficient for $A$ as the first three
entries of the last column of $D(A^{\prime},\left(  w\circ v\right)  ^{\left(
\frac{1}{2}\right)  })$ are positive and, therefore, $G(A^{\prime},(w\circ
v)^{\left(  \frac{1}{2}\right)  })$ is not strongly connected.$\allowbreak$
\end{example}

\section{Numerical experiments\label{numerical}}

We next give numerical examples in which we compare the geometric means $w$ of
the vectors in different proper subsets of columns of a reciprocal matrix $A$
with the geometric mean of all columns of $A,$ denoted here by $w_{C},$ a
vector proposed by several authors to obtain a consistent matrix approximating
$A$, as this method has a strong axiomatic background \cite{barzilai,
csato2018, csato2019, fichtner86, lundy}. Recall from Section \ref{s5} that
all these vectors are efficient for $A.$ We take $\left\Vert D(A,w)\right\Vert
_{1},$ the sum of all entries of $\left\vert D(A,w)\right\vert ,$ as a measure
of effectiveness of $w\in\mathcal{E}(A),$ as well as $\left\Vert
D(A,w)\right\Vert _{2},$ the Frobenius norm of $D(A,w).$ Recall that, for an
$n$-by-$n$ matrix $B=[b_{ij}],$ we have%
\[
\left\Vert B\right\Vert _{2}=\left(  \sum_{i,j=1,\ldots,n}(b_{ij})^{2}\right)
^{\frac{1}{2}}.
\]
Other measures (norms) are possible. For comparison, we also consider the case
in which $w$ is the Perron eigenvector of $A,$ denoted by $w_{P}$, as it is
one of the most used vectors to estimate a consistent matrix close to $A$. Our
experiments were done using the software Octave version 6.1.0.

\begin{example}
\label{exx11}Consider the matrix%
\[
A=\left[
\begin{array}
[c]{ccccc}%
1 & \frac{9}{5} & \frac{6}{5} & 12 & 6\\
&  &  &  & \\
\frac{5}{9} & 1 & \frac{4}{5} & 100 & 5\\
&  &  &  & \\
\frac{5}{6} & \frac{5}{4} & 1 & \frac{17}{10} & 6\\
&  &  &  & \\
\frac{1}{12} & \frac{1}{100} & \frac{10}{17} & 1 & 3\\
&  &  &  & \\
\frac{1}{6} & \frac{1}{5} & \frac{1}{6} & \frac{1}{3} & 1
\end{array}
\right]  \in\mathcal{PC}_{5}.
\]
There are $31$ distinct subsets of the set of columns of $A.$ We identify each
subset with a sequence of five $0/1$ numbers, in which a $1$ in position $i$
means that the $i$-th column of $A$ belongs to the subset, while a $0$ means
that it does not belong to the subset. The sequences are in increasing
(numerical) order and by $S_{i}$ we denote the subset of columns associated
with the $i$-th sequence. Note that $S_{31}$ is the set of all columns of $A.$
By $w_{i}$ we denote the geometric mean of the vectors in $S_{i}.$

In Table \ref{tab1} we give the norms $\left\Vert D(A,w_{i})\right\Vert _{1}$
and $\left\Vert D(A,w_{i})\right\Vert _{2},$ $i=1,\ldots,31.$ In Table
\ref{tab2} we emphasize the results obtained for the geometric mean $w_{C}$ of
all columns$,$ for the vectors that produce the smallest and the largest
values of $\left\Vert D(A,w_{i})\right\Vert _{1}$ and $\left\Vert
D(A,w_{i})\right\Vert _{2},$ and also consider the case of the Perron
eigenvector $w_{P}$ of $A$ (which is efficient). Note that
\[
\frac{\max_{i}\left\Vert D(A,w_{i})\right\Vert _{1}}{\min_{i}\left\Vert
D(A,w_{i})\right\Vert _{1}}=3.7048\qquad\text{and}\qquad\frac{\max
_{i}\left\Vert D(A,w_{i})\right\Vert _{2}}{\min_{i}\left\Vert D(A,w_{i}%
)\right\Vert _{2}}=5.8235.
\]
It can be observed that, according to the considered measures, there are
proper subsets of columns that produce better results than those for the
Perron eigenvector and for the set of all columns.

We summarize our results in Figure \ref{fig1}, in which we give a graphic with
a comparison of all the results obtained. In the $x$ axis we have the index
$i$ of each subset $S_{i}$ of columns. In the $y$ axis we have the values of
$\left\Vert D(A,w_{i})\right\Vert _{1}$ and $\left\Vert D(A,w_{i})\right\Vert
_{2}$ for the different vectors $w_{i}.$ A line jointing the values of each of
these norms for the different subsets of columns is plotted. A horizontal line
corresponding to each of the considered norms for the Perron eigenvector also appears.
\end{example}

\bigskip

\begin{example}
\label{exx22}Consider the matrix%
\[
A=\left[
\begin{array}
[c]{cccccccc}%
1 & \frac{9}{5} & \frac{6}{5} & 12 & 6 & 2 & 5 & 3\\
&  &  &  &  &  &  & \\
\frac{5}{9} & 1 & \frac{4}{5} & \frac{1}{10} & 6 & \frac{23}{10} & \frac{1}{2}
& \frac{43}{10}\\
&  &  &  &  &  &  & \\
\frac{5}{6} & \frac{5}{4} & 1 & \frac{17}{10} & 6 & \frac{1}{5} & 50 &
\frac{3}{10}\\
&  &  &  &  &  &  & \\
\frac{1}{12} & 10 & \frac{10}{17} & 1 & 3 & 12 & 25 & \frac{13}{10}\\
&  &  &  &  &  &  & \\
\frac{1}{6} & \frac{1}{6} & \frac{1}{6} & \frac{1}{3} & 1 & \frac{21}{10} &
2 & 3\\
&  &  &  &  &  &  & \\
\frac{1}{2} & \frac{10}{23} & 5 & \frac{1}{12} & \frac{10}{21} & 1 & 1 & 3\\
&  &  &  &  &  &  & \\
\frac{1}{5} & 2 & \frac{1}{50} & \frac{1}{25} & \frac{1}{2} & 1 & 1 & 3\\
&  &  &  &  &  &  & \\
\frac{1}{3} & \frac{10}{43} & \frac{10}{3} & \frac{10}{13} & \frac{1}{3} &
\frac{1}{3} & \frac{1}{3} & 1
\end{array}
\right]  \in\mathcal{PC}_{8}.
\]
Table \ref{tab3} and Figure \ref{fig2} are the analogs of Table \ref{tab2} and
Figure \ref{fig1} for the $8$-by-$8$ reciprocal matrix considered here. Note
that in this case we have $255$ different subsets $S_{i}$ of the set of
columns of $A.$ Again, a proper subset of the columns produces better results
than either all columns or the Perron vector (which is efficient for $A$).
Note that
\[
\frac{\max_{i}\left\Vert D(A,w_{i})\right\Vert _{1}}{\min_{i}\left\Vert
D(A,w_{i})\right\Vert _{1}}=5.0432\qquad\text{and}\qquad\frac{\max
_{i}\left\Vert D(A,w_{i})\right\Vert _{2}}{\min_{i}\left\Vert D(A,w_{i}%
)\right\Vert _{2}}=10.890,
\]

\end{example}

\bigskip

\begin{remark}
\label{exx3}In this example we generated $100$ reciprocal matrices
$A\in\mathcal{PC}_{5},$ in which the entries in the upper triangular part are
random real numbers in the interval $(0,100),$ and compare, in terms of the
1-norm (Figure \ref{fig3}) and the Frobenius norm (Figure \ref{fig4}) of
$D(A,w),$ the cases in which $w$ is the geometric mean $w_{C}$ of all columns
of $A$ and $w$ is the vector that produces the smallest norm among all
geometric means of the subsets of the columns of $A.$ Again, it can be
verified that, in general, a proper subset of columns produce better results.
\end{remark}

\bigskip

In our previous examples we have emphasized that the minimum $1$-norm and the
minimum Frobenius norm of $D(A,w),$ when $w$ runs over the geometric means of
the sets of columns of a reciprocal matrix $A,$ is, in general, not attained
by $w_{C},$ the geometric mean of all columns of $A.$ However, if we consider
a large set of random reciprocal matrices $A_{j}$, we can see that the sum,
for all $A_{j}$'s, of some normalization of $\left\Vert D(A_{j},w_{C}%
\right\Vert _{2}$ performs well when compared to the corresponding sums for
other sets of columns, even when $\min\left\Vert D(A_{j},w\right\Vert _{2},$
with $w$ running over the geometric means of the subsets of columns of
$A_{j},$ is not attained by $w_{C}$ for most $j$'s.

\begin{remark}
\label{ex18}We consider reciprocal matrices $A_{j}\in\mathcal{PC}_{5}$,
$j=1,\ldots,1000,$ by generating $1000$ matrices $B_{j}$ with random real
entries in the interval $(0,10)$ and letting $A_{j}=B_{j}\circ(B_{j}%
^{(-1)})^{T},$ where $B_{j}^{(-1)}$ denotes the entry-wise inverse of $B_{j}$
and $\circ$ the Hadamard product. For each $i=1,\ldots,31,$ we determine
\[
p(i):=%
%TCIMACRO{\dsum \limits_{j=1}^{1000}}%
%BeginExpansion
{\displaystyle\sum\limits_{j=1}^{1000}}
%EndExpansion
\frac{\left\Vert D(A_{j},w_{ij})\right\Vert _{2}}{\min_{k}\left\Vert
D(A_{j},w_{kj})\right\Vert _{2}},
\]
in which $w_{ij}$ is the geometric mean of subset $S_{i}$ of the columns of
$A_{j}.$ (We identify the subsets $S_{i}$ with indices of columns, as
introduced in Example \ref{exx11}.) In Table \ref{tab4} we display the values
of $p(i),$ $i=1,\ldots,31.$ For each $i,$ we also display the number $n(i)$ of
$j$'s for which $\min\left\Vert D(A_{j},w\right\Vert _{2}$, when $w$ runs over
all the geometric means of the subsets of columns of $A_{j}$, is attained by
the subset $S_{i}$.
\end{remark}

\bigskip

We finally give an example illustrating that, close to consistency, all
subsets of columns perform about the same, as expected.

\begin{example}
Consider the matrix%
\[
A=\left[
\begin{array}
[c]{ccccc}%
1 & 1 & 1 & 1.1 & 1\\
1 & 1 & 0.9 & 1 & 1\\
1 & \frac{1}{0.9} & 1 & 1 & 1.1\\
\frac{1}{1.1} & 1 & 1 & 1 & 1\\
1 & 1 & \frac{1}{1.1} & 1 & 1
\end{array}
\right]  \in\mathcal{PC}_{5}.
\]
If $w_{i}$, $i=1,\ldots,31,$ are the geometric means of the $31$ subsets of
the columns of $A,$ we have
\[
\frac{\max_{i}\left\Vert D(A,w_{i})\right\Vert _{1}}{\min_{i}\left\Vert
D(A,w_{i})\right\Vert _{1}}=1.7121\qquad\text{and}\qquad\frac{\max
_{i}\left\Vert D(A,w_{i})\right\Vert _{2}}{\min_{i}\left\Vert D(A,w_{i}%
)\right\Vert _{2}}=1.9125,
\]
which are small when compared with the corresponding values in Examples
\ref{exx11} and \ref{exx22}. Also,%
\[
\frac{\left\Vert D(A,w_{C})\right\Vert _{1}}{\min_{i}\left\Vert D(A,w_{i}%
)\right\Vert _{1}}=1.0136\qquad\text{and}\qquad\frac{\left\Vert D(A,w_{C}%
)\right\Vert _{2}}{\min_{i}\left\Vert D(A,w_{i})\right\Vert _{2}}=1.
\]

\end{example}

\section{Conclusions\label{s6}$\allowbreak$}

In the context of the Analytic Hierarchic Process, pairwise comparison
matrices (PC matrices), also called reciprocal matrices, appear to rank
different alternatives. In practice, the obtained reciprocal matrices are
usually inconsistent and a good consistent matrix approximating the reciprocal
matrix should be obtained. A consistent matrix is uniquely determined by a
positive vector (the vector of priorities or weights). Many methods have been
proposed in the literature to obtain the vectors from which a consistent
matrix approximating a given reciprocal matrix is constructed. Some of the
most used methods consist on the choice of the Perron eigenvector of the
reciprocal matrix or on the Hadamard geometric mean of all its columns. An
important property that should be satisfied by the vectors on which such a
consistent matrix is based is efficiency. It is known that the Hadamard
geometric mean of all the columns of a reciprocal matrix is efficient, though
the Perron eigenvector not always satisfies this property.

Here we give an algorithm to construct efficient vectors for a reciprocal
matrix $A$ from efficient vectors for principal submatrices of $A.$ We also
show that the geometric mean of the vectors in any nonempty subset of the
columns of $A$ is efficient for $A.$ We give an example that the geometric
mean of two efficient vectors need not be efficient. It leaves the question of
when the Hadamard geometric mean of two efficient vectors is efficient.

We give numerical examples comparing the geometric means obtained from proper
subsets of the columns of $A$ with the geometric mean of all the columns of
$A.$ We conclude that the geometric mean of some proper subsets of columns may
produce better results, also when compared with the Perron eigenvector, which
we include for completeness. So, according to our results, there is no
evidence that the geometric mean of all columns has a universal advantage over
the geometric means of other sets of columns, specially when the level of
inconsistency is significant.

\bigskip\bigskip

\textbf{Declaration} All authors declare that they have no conflicts of interest.

\newpage%

%TCIMACRO{\TeXButton{B}{\begin{table}[] \centering}}%
%BeginExpansion
\begin{table}[] \centering
%EndExpansion%
\begin{tabular}
[c]{|c|c|c|c||c|c|c|c|}\hline
$i$ & $S_{i}$ & $\left\Vert D(A,w_{i})\right\Vert _{1}$ & $\left\Vert
D(A,w_{i})\right\Vert _{2}$ & $i$ & $S_{i}$ & $\left\Vert D(A,w_{i}%
)\right\Vert _{1}$ & $\left\Vert D(A,w_{i})\right\Vert _{2}$\\\hline\hline
$1$ & $00001$ & $111.06$ & $98.846$ & $17$ & $10001$ & $111.86$ &
$96.992$\\\hline
$2$ & $00010$ & $401.53$ & $302.311$ & $18$ & $10010$ & $127.49$ &
$79.599$\\\hline
$3$ & $00011$ & $150.47$ & $94.324$ & $19$ & $10011$ & $123.22$ &
$90.829$\\\hline
$4$ & $00100$ & $112.16$ & $99.154$ & $20$ & $10100$ & $111.92$ &
$97.300$\\\hline
$5$ & $00101$ & $111.58$ & $99.005$ & $21$ & $10101$ & $111.90$ &
$97.910$\\\hline
$6$ & $00110$ & $152.21$ & $95.328$ & $22$ & $10110$ & $123.95$ &
$91.500$\\\hline
$7$ & $00111$ & $130.39$ & $95.576$ & $23$ & $10111$ & $120.39$ &
$94.442$\\\hline
$8$ & $01000$ & $318.27$ & $209.361$ & $24$ & $11000$ & $154.93$ &
$88.719$\\\hline
$9$ & $01001$ & $115.66$ & $88.581$ & $25$ & $11001$ & $113.28$ &
$90.595$\\\hline
$10$ & $01010$ & $111.78$ & $51.913$ & $26$ & $11010$ & $119.47$ &
$64.637$\\\hline
$11$ & $01011$ & $116.87$ & $76.601$ & $27$ & $11011$ & $112.64$ &
$82.586$\\\hline
$12$ & $01100$ & $116.35$ & $89.645$ & $28$ & $11100$ & $113.69$ &
$91.184$\\\hline
$13$ & $01101$ & $108.38$ & $94.171$ & $29$ & $11101$ & $108.57$ &
$94.052$\\\hline
$14$ & $01110$ & $118.62$ & $78.223$ & $30$ & $11110$ & $113.54$ &
$83.474$\\\hline
$15$ & $01111$ & $115.57$ & $88.454$ & $31$ & $11111$ & $111.96$ &
$89.539$\\\hline
$16$ & $10000$ & $109.61$ & $93.772$ &  &  &  & \\\hline
\end{tabular}
$\ \ \ \ \ \ \ \ $%
\caption{$1$-norm and Frobenius norm of  $D(A,w_i)$, when  $w_i$ is the geometric mean of the columns of  $A$ in the subset $S_i$,  $i=1,...,31$  (Example \ref{exx11}).}\label{tab1}%
%TCIMACRO{\TeXButton{E}{\end{table}}}%
%BeginExpansion
\end{table}%
%EndExpansion
%

%TCIMACRO{\TeXButton{B}{\begin{table}[] \centering}}%
%BeginExpansion
\begin{table}[] \centering
%EndExpansion%
\begin{tabular}
[c]{|c||c|c|c|}\hline
& $S_{i}$ associated with $w$ & $\left\Vert D(A,w)\right\Vert _{1}$ &
$\left\Vert D(A,w)\right\Vert _{2}$\\\hline\hline
$w$ st. $\min_{i}\left\Vert D(A,w_{i})\right\Vert _{1}=\left\Vert
D(A,w)\right\Vert _{1}$ & $S_{13}$ & $108.38$ & \\\hline
$w$ st. $\max_{i}\left\Vert D(A,w_{i})\right\Vert _{1}=\left\Vert
D(A,w)\right\Vert _{1}$ & $S_{2}$ & $401.53$ & \\\hline
$w$ st. $\min_{i}\left\Vert D(A,w_{i})\right\Vert _{2}=\left\Vert
D(A,w)\right\Vert _{2}$ & $S_{10}$ &  & $51.913$\\\hline
$w$ st. $\max_{i}\left\Vert D(A,w_{i})\right\Vert _{2}=\left\Vert
D(A,w)\right\Vert _{2}$ & $S_{2}$ &  & $302.31$\\\hline
$w_{C}=w_{31}$ & $S_{31}$ & $111.96$ & $89.539$\\\hline
$w_{P}$ &  & $117.43$ & $84.454$\\\hline
\end{tabular}
$\ \ \ \ \ \ \ \ $%
\caption{Comparison of the performance of the geometric mean  $w_C$ of all  the columns of  $A$, the Perron eigenvector  $w_P$ of  $A$ and the geometric means of the subsets of  the columns of  $A$ with best and worst  behaviors (Example \ref{exx11}).}\label{tab2}%
%TCIMACRO{\TeXButton{E}{\end{table}}}%
%BeginExpansion
\end{table}%
%EndExpansion
%

%TCIMACRO{\FRAME{fhFUX}{3.9078in}{3.1249in}{0pt}{\Qcb{Comparison of the
%performance of the geometric means of the subsets of columns of $A$, and the
%Perron eigenvector $w_{P}$ of $A$ (Example \ref{exx11}). }}{\Qlb{fig1}%
%}{Figure}{\special{ language "Scientific Word";  type "GRAPHIC";
%maintain-aspect-ratio TRUE;  display "USEDEF";  valid_file "T";
%width 3.9078in;  height 3.1249in;  depth 0pt;  original-width 3.8481in;
%original-height 3.0718in;  cropleft "0";  croptop "1";  cropright "0.9996";
%cropbottom "0";  tempfilename 'RTRWQN01.wmf';tempfile-properties "XPR";}}}%
%BeginExpansion
%%%%%%%%%%%
\begin{figure}[h]
 \includegraphics[width=\linewidth]{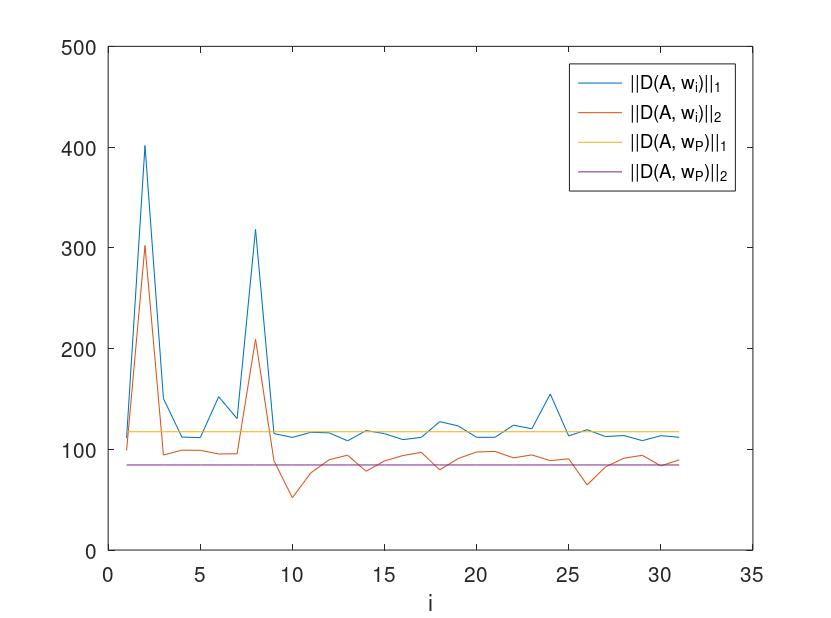}
 \caption{\label{fig1} Comparison of the performance of the geometric means of the subsets
of columns of $A$ and the Perron eigenvector $w_{P}$ of $A$ (Example
\ref{exx11}).}

 \end{figure}
%%%%%%%%%%%
%\begin{figure}
%[h]
%\begin{center}
%%trim=0.000000in 0.000000in 0.001539in 0.000000in,
%natheight=3.071800in,
%natwidth=3.848100in,
%height=3.1249in,
%width=3.9078in
%]%
%{RTRWQN01.wmf}%
%}\caption{Comparison of the performance of the geometric means of the %subsets
%of columns of $A$, and the Perron eigenvector $w_{P}$ of $A$ (Example
%\ref{exx11}). }%
%\label{fig1}%
%\end{center}
%\end{figure}
%EndExpansion
%

%TCIMACRO{\TeXButton{B}{\begin{table}[] \centering} }%
%BeginExpansion
\begin{table}[] \centering
%EndExpansion
\begin{tabular}
[c]{|c||c|c|c|}\hline
& $S_{i}$ associated with $w$ & $\left\Vert D(A,w)\right\Vert _{1}$ &
$\left\Vert D(A,w)\right\Vert _{2}$\\\hline\hline
$w$ st. $\min_{i}\left\Vert D(A,w_{i})\right\Vert _{1}=\left\Vert
D(A,w)\right\Vert _{1}$ & $S_{235}$ & $145.76$ & \\\hline
$w$ st. $\max_{i}\left\Vert D(A,w_{i})\right\Vert _{1}=\left\Vert
D(A,w)\right\Vert _{1}$ & $S_{16}$ & $735.07$ & \\\hline
$w$ st. $\min_{i}\left\Vert D(A,w_{i})\right\Vert _{2}=\left\Vert
D(A,w)\right\Vert _{2}$ & $S_{34}$ &  & $32.289$\\\hline
$w$ st. $\max_{i}\left\Vert D(A,w_{i})\right\Vert _{2}=\left\Vert
D(A,w)\right\Vert _{2}$ & $S_{16}$ &  & $351.62$\\\hline
$w_{C}=w_{255}$ & $S_{255}$ & $151.16$ & $53.620$\\\hline
$w_{P}$ &  & $150.54$ & $52.462$\\\hline
\end{tabular}
$\ $%
\caption{Comparison of the performance of the geometric mean  $w_C$ of all  the columns of  $A$, the Perron eigenvector $w_P$ of  $A$ and the geometric means of the subsets of  the columns of  $A$ with best and worst  behaviors (Example \ref{exx22}).}\label{tab3}%
%TCIMACRO{\TeXButton{E}{\end{table}}}%
%BeginExpansion
\end{table}%
%%%%%example15 fig 2
\begin{figure}[h]
 \includegraphics[width=\linewidth]{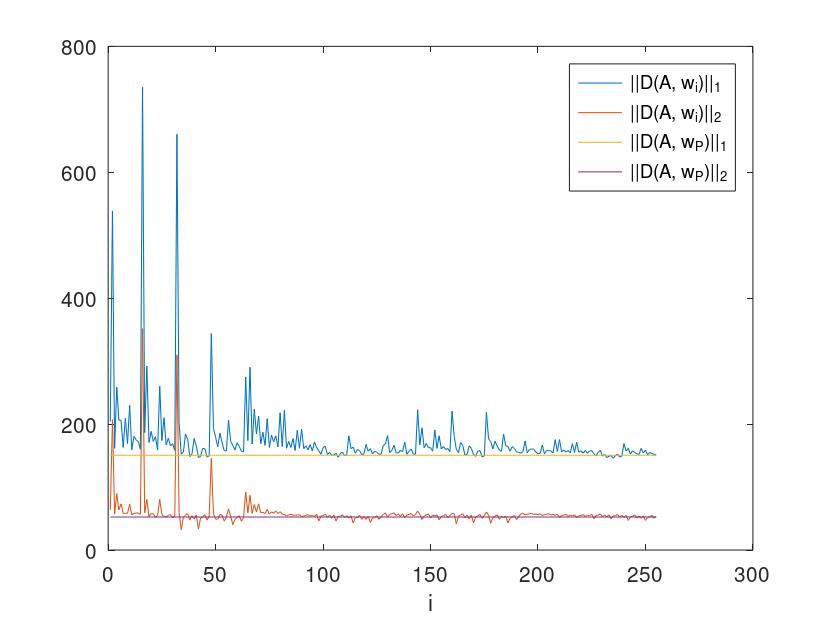}
 \caption{\label{fig2} Comparison of the performance of the geometric means of the subsets of the columns of $A$ and the Perron eigenvector $w_{P}$ of $A$ (Example \ref{exx22}).}

 \end{figure}
\begin{figure}[h]
 \includegraphics[width=\linewidth]{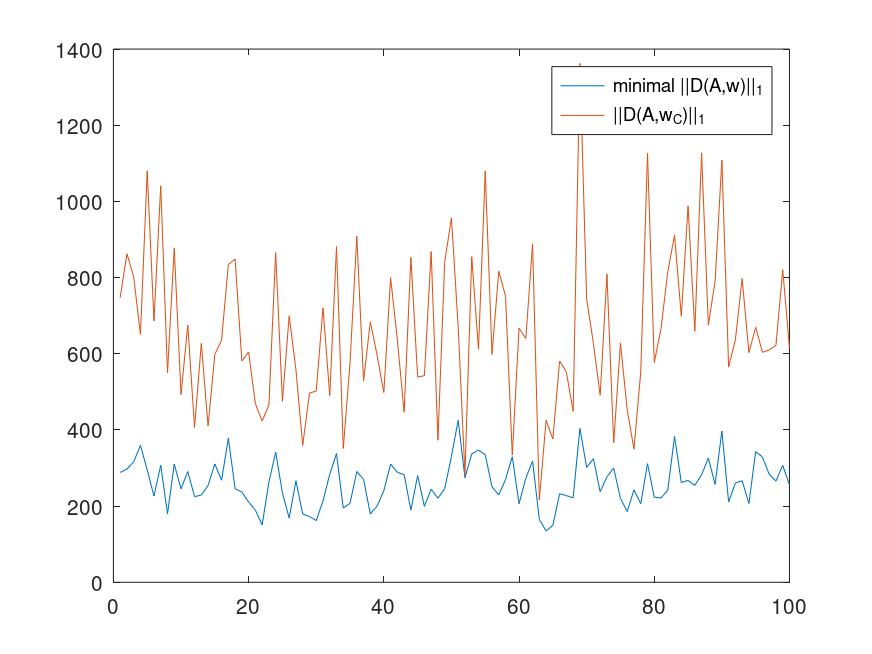}
 \caption{\label{fig3} Comparison of the minimal value of the $1$-norm of $D(A,w)$, when $w$ runs over the geometric means of the subsets of the columns of $A$, with the $1$-norm of $D(A,w),$ when $w$ is the geometric mean $w_{C}$ of all columns of $A$, for $100$ reciprocal matrices $A$ (Experiment \ref{exx3}).}

 \end{figure}

%}\caption{Comparison of the minimal value of the $1$-norm of $D(A,w)$, %when
%$w$ runs over the geometric means of the subsets of the columns of %$A$, with
%the $1$-norm of $D(A,w),$ when $w$ is the geometric mean $w_{C}$ of all
%columns of $A$, for $100$ reciprocal matrices $A$ (Experiment %\ref{exx3}).}%
%\label{fig3}%
%\end{center}
%\end{figure}
%EndExpansion
%

%TCIMACRO{\FRAME{fhFUX}{4.1893in}{3.1938in}{0pt}{\Qcb{Comparison of the minimal
%value of the Frobenius norm of $D(A,w)$, when $w$ runs over the geometric
%means of the subsets of the columns of $A$, with the Frobenius norm of
%$D(A,w),$ when $w$ is the geometric mean $w_{C}$ of all columns of $A$, for
%$100$ reciprocal matrices $A$ (Experiment \ref{exx3}).}}{\Qlb{fig4}}%
%{Figure}{\special{ language "Scientific Word";  type "GRAPHIC";
%maintain-aspect-ratio TRUE;  display "FULL";  valid_file "T";
%width 4.1893in;  height 3.1938in;  depth 0pt;  original-width 4.8194in;
%original-height 3.6696in;  cropleft "0";  croptop "1";  cropright "1";
%cropbottom "0";  tempfilename 'RTRWQN04.bmp';tempfile-properties "XPR";}}}%
%BeginExpansion
%\begin{figure}
%[h]
%\begin{center}
%\fbox{\includegraphics[
%natheight=3.669600in,
%natwidth=4.819400in,
%height=3.1938in,
%width=4.1893in
%]%
%{RTRWQN04.bmp}%
%%%%%%%%%%%%%% fig 4 example 16
\begin{figure}[h]
 \includegraphics[width=\linewidth]{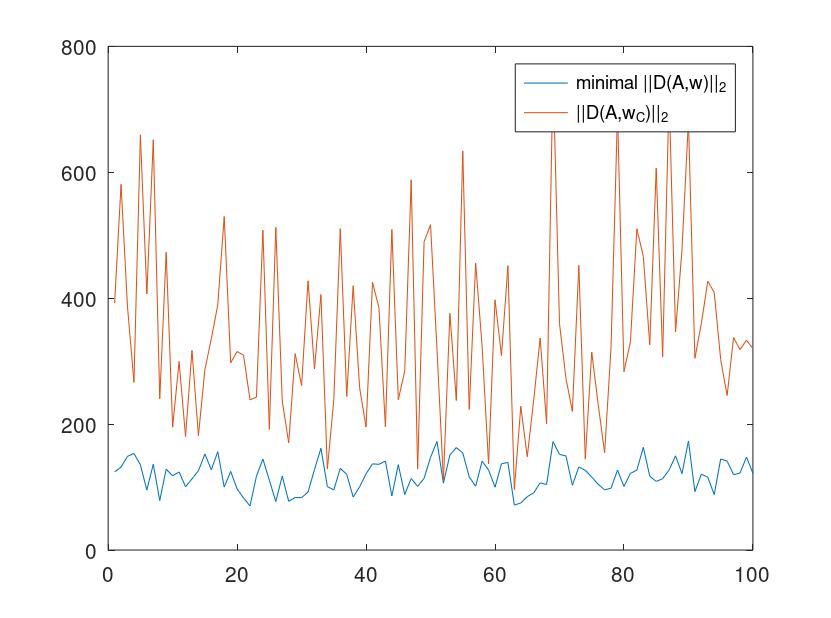}
 \caption{\label{fig4} Comparison of the minimal value of the Frobenius norm of $D(A,w)$, when $w$ runs over the geometric means of the subsets of the columns of $A$, with the Frobenius norm of $D(A,w),$ when $w$ is the geometric mean $w_{C}$ of all columns of $A$, for $100$ reciprocal matrices $A$ (Experiment \ref{exx3}).}

 \end{figure}

%}\caption{Comparison of the minimal value of the Frobenius norm of %$D(A,w)$,
%when $w$ runs over the geometric means of the subsets of the columns %of $A$,
%with the Frobenius norm of $D(A,w),$ when $w$ is the geometric mean %$w_{C}$ of
%all columns of $A$, for $100$ reciprocal matrices $A$ (Experiment
%\ref{exx3}).}%
%\label{fig4}%
%\end{center}
%\end{figure}
%EndExpansion
%

%TCIMACRO{\TeXButton{B}{\begin{table}[] \centering}}%
%BeginExpansion
\begin{table}[] \centering
%EndExpansion%
\begin{tabular}
[c]{|c|c|c|c||c|c|c|c|}\hline
$i$ & $S_{i}$ & $p(i)$ & $n(i)$ & $i$ & $S_{i}$ & $p(i)$ & $n(i)$%
\\\hline\hline
$1$ & $00001$ & $3848.0$ & $4$ & $17$ & $10001$ & $1776.9$ & $50$\\\hline
$2$ & $00010$ & $3839.7$ & $4$ & $18$ & $10010$ & $1770.0$ & $48$\\\hline
$3$ & $00011$ & $1784.1$ & $38$ & $19$ & $10011$ & $1651.0$ & $37$\\\hline
$4$ & $00100$ & $4415.0$ & $3$ & $20$ & $10100$ & $1778.4$ & $44$\\\hline
$5$ & $00101$ & $1763.9$ & $45$ & $21$ & $10101$ & $1636.8$ & $37$\\\hline
$6$ & $00110$ & $1741.5$ & $49$ & $22$ & $10110$ & $1640.7$ & $29$\\\hline
$7$ & $00111$ & $1642.3$ & $42$ & $23$ & $10111$ & $1621.0$ & $24$\\\hline
$8$ & $01000$ & $4015.5$ & $4$ & $24$ & $11000$ & $1770.5$ & $42$\\\hline
$9$ & $01001$ & $1771.0$ & $41$ & $25$ & $11001$ & $1646.4$ & $27$\\\hline
$10$ & $01010$ & $1765.2$ & $31$ & $26$ & $11010$ & $1638.1$ & $36$\\\hline
$11$ & $01011$ & $1650.1$ & $34$ & $27$ & $11011$ & $1624.5$ & $23$\\\hline
$12$ & $01100$ & $1757.7$ & $41$ & $28$ & $11100$ & $1640.4$ & $37$\\\hline
$13$ & $01101$ & $1633.8$ & $48$ & $29$ & $11101$ & $1615.6$ & $32$\\\hline
$14$ & $01110$ & $1621.4$ & $60$ & $30$ & $11110$ & $1612.4$ & $33$\\\hline
$15$ & $01111$ & $1614.2$ & $9$ & $31$ & $11111$ & $1613.8$ & $44$\\\hline
$16$ & $10000$ & $3618.3$ & $4$ &  &  &  & \\\hline
\end{tabular}
$\ \ \ \ \ \ \ \ $%
\caption{For 1000 random $5$-by-$5$-reciprocal matrices $A_j$, $p(i)$ is the sum of the normalized Frobenius norm of  $D(A_j,w_{ij})$, $j=1,...,1000$, in which $w_{ij}$ is the geometric mean of the columns of  $A_j$ in the subset $S_i$;  $n(i)$ is the number of  matrices $A_j$ for which the minimum of  the Frobenius norm of  $D(A_j,w)$ occurs for the subset  $S_i$ (Experiment \ref{ex18}).}\label{tab4}%
%TCIMACRO{\TeXButton{E}{\end{table}}}%
%BeginExpansion
\end{table}%
%EndExpansion

\end{document}